\title{\large{Discreteness of the spectrum\\ of Schr\"odinger operators\\ with non-negative matrix-valued potentials}}
\author{Gian Maria Dall'Ara}
\address{Scuola Normale Superiore, Pisa, Italy}
\email{gianmaria.dallara@sns.it}
\date{\today}
\newcommand{\R}{\mathbb{R}}
\newcommand{\N}{\mathbb{N}}
\newcommand{\Z}{\mathbb{Z}}
\newcommand{\C}{\mathbb{C}}
\newcommand{\be}{\begin{equation*}}
\newcommand{\ee}{\end{equation*}}
\newcommand{\bel}{\begin{equation}}
\newcommand{\eel}{\end{equation}}
\newcommand{\bee}{\begin{eqnarray*}}
\newcommand{\eee}{\end{eqnarray*}}
\newcommand{\eps}{\varepsilon}
\newtheorem{thm}{Theorem}
\newtheorem{lem}[thm]{Lemma}
\newtheorem{prop}[thm]{Proposition}
\newtheorem{dfn}[thm]{Definition}
\begin{document}

\maketitle

\begin{abstract}
We prove three results giving sufficient and/or necessary conditions for discreteness of the spectrum of Schr\"o\-dinger operators with non-negative matrix-valued potentials, i.e., operators acting on $\psi\in L^2(\R^n,\C^d)$ by the formula\be
H_V\psi:=-\Delta\psi+V\psi,
\ee where the potential $V$ takes values in the set of non-negative Hermitian $d\times d$ matrices.

The first theorem provides a characterization of discreteness of the spectrum when the potential $V$ is in a matrix-valued $A_\infty$ class, thus extending a known result in the scalar case ($d=1$). We also discuss a subtlety in the definition of the appropriate matrix-valued $A_\infty$ class.

The second result is a sufficient condition for discreteness of the spectrum, which allows certain degenerate potentials, i.e., such that $\det(V)\equiv0$. To formulate the condition, we introduce a notion of oscillation for subspace-valued mappings.

Our third and last result shows that if $V$ is a $2\times2$ real polynomial potential, then $-\Delta+V$ has discrete spectrum if and only if the scalar operator $-\Delta+\lambda$ has discrete spectrum, where $\lambda(x)$ is the minimal eigenvalue of $V(x)$.

\end{abstract}

\section{Introduction}

\subsection{Setting and main results} The object of this paper is the analysis of Schr\"o\-dinger operators with non-negative matrix-valued potentials. Such operators act on $L^2(\R^n,\C^d)$, the space of $\C^d$-valued square-integrable functions, by the formula\bel\label{schrod}
H_V\psi(x):=-\Delta\psi(x)+V(x)\psi(x) \qquad \forall\psi\in L^2(\R^n,\C^d),
\eel where $V(x)$ is a $d\times d$ non-negative Hermitian matrix, i.e., $(V(x)v,v)\geq0$ for every $v\in\C^d$ and $x\in\R^n$. Here and in the sequel we denote by $(\cdot,\cdot)$ the canonical scalar product of $\C^d$, and by $|\cdot|$ the associated norm. The space of $d\times d$ Hermitian matrices is denoted by $H_d$, and $H_d^{\geq0}$ represents the closed cone consisting of non-negative elements of $H_d$.
The Laplacian $\Delta$ always acts componentwise on vector-valued functions: if $\psi=(\psi_1,\dots,\psi_d)$, then $\Delta\psi=(\Delta\psi_1,\dots,\Delta\psi_d)$. 

We assume that $V$ is locally integrable, in which case, by standard functional analysis, $H_V$ defines an unbounded non-negative self-adjoint operator on $L^2(\R^n,\C^d)$ (see Section \ref{f-a} for the details). 

Our aim is to study the problem of determining whether the spectrum of $H_V$ is a discrete subset of $[0,+\infty)$ consisting of eigenvalues of finite multiplicity. This is customarily expressed by saying that the spectrum of $H_V$ is discrete. 

Our interest in this question comes from the analysis of the weighted $\overline{\partial}$-problem in $\C^n$ and of the $\overline\partial_b$-problem on polynomial models of CR manifolds. The canonical solutions of these problems turn out to be related to certain Schr\"odinger operators. In particular compactness of the canonical solution operator is linked to discreteness of the spectrum of Schr\"odinger operators. This point of view has been successfully exploited when $n=1$ in \cite{christ}, \cite{berndtsson}, \cite{fu-straube}, \cite{christ-fu}. Understanding operators of the form \eqref{schrod} is a first step in the study of the more complex ones appearing in the weighted $\overline\partial$-problem when $n\geq2$ (see, e.g., \cite[Section 3]{haslinger-helffer}). 

The problem of discreteness of the spectrum of $H_V$ has been deeply studied in the scalar case ($d=1$): in 1953 Molchanov obtained in \cite{molcanov} a necessary and sufficient condition, and Maz'ya and Shubin significantly improved it in \cite{ma-sh}. See Theorem \ref{maz'ya-shubin} below for a slightly simplified statement. These characterizations only assume that the potentials are non-negative and locally integrable, but the conditions are often hard to check. Therefore it is interesting to look also for sufficient conditions that are not necessary, but easier to verify (as in \cite{simon}), or to restrict the attention to a smaller class of potentials and obtain a simpler characterization for them. 

For example, one may focus on potentials in $A_{\infty,loc}(\R^n)$, the local Muckenhoupt class. We recall that $V\in A_{\infty,loc}(\R^n)$ if and only if there exist $\delta, c>0$ and $\ell_0>0$ such that the inequality\bel\label{scalar-Ainfty}
\left|\left\{x\in Q\colon\ V(x)\geq \frac{\delta}{|Q|}\int_QV\right\}\right|\geq c|Q|
\eel holds for every cube $Q$ of side $\ell_Q\leq \ell_0$. If $V\in A_{\infty,loc}(\R^n)$, the scalar operator $H_V$ has discrete spectrum if and only if 
\be \lim_{Q\rightarrow \infty,\ \ell_Q=\ell_1}\int_{Q}V=+\infty,\ee 
for some (hence every) $\ell_1>0$. The limit is taken as the center of $Q$ goes to $\infty$, while the side $\ell_Q$ remains constant. This characterization follows from the improved Fefferman-Phong inequality of \cite[Lemma 2.1]{auscher-benali}, or from Theorem \ref{Ainfty-thm} below.\newline

In Section \ref{Ainfty-section} we introduce a matrix-valued analogue of $A_{\infty,loc}(\R^n)$, which we denote by $A_{\infty,loc}(\R^n,H_d)$ and consists of $H_d^{\geq0}$-valued locally integrable functions satisfying \eqref{scalar-Ainfty}, where the inequality between Hermitian matrices is to be interpreted in the sense of quadratic forms. Then we extend the previous characterization, proving the following theorem (see Theorem \ref{Ainfty-thm} for a more detailed statement).
\begin{thm}\label{Ainfty-intro} If $V\in A_{\infty,loc}(\R^n,H_d)$, then $H_V$ has discrete spectrum if and only if \be \lim_{Q\rightarrow \infty,\ \ell_Q=\ell_1}\lambda\left(\int_QV\right)=+\infty,\ee for some (hence every) $\ell_1>0$. Here $\lambda(A)$ denotes the minimal eigenvalue of the Hermitian matrix $A$. 
\end{thm}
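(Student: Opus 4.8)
The plan is to combine the variational (Persson-type) characterization of discreteness of the spectrum with a matrix-valued Fefferman--Phong inequality, the latter being where the $A_{\infty}$ hypothesis enters. Write $q_V(\psi):=\int_{\R^n}\big(|\nabla\psi|^2+(V\psi,\psi)\big)\,dx$, and let $Q(x,\ell)$ denote the cube of side $\ell$ centered at $x$. \emph{Necessity} is elementary and does not use the $A_\infty$ hypothesis: suppose that for some $\ell_1>0$ there are points $x_k\to\infty$ with $\lambda\big(\int_{Q(x_k,\ell_1)}V\big)\le M$ for all $k$; choose unit vectors $v_k\in\C^d$ realizing this minimal eigenvalue, fix $\phi\in C_c^\infty\big(Q(0,1)\big)$ with $\|\phi\|_2=1$, and set $\psi_k(x):=\ell_1^{-n/2}\,\phi\big((x-x_k)/\ell_1\big)\,v_k$. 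Then $\|\psi_k\|_2=1$, $\int|\nabla\psi_k|^2=\ell_1^{-2}\|\nabla\phi\|_2^2$, and $\int(V\psi_k,\psi_k)\le\ell_1^{-n}\|\phi\|_\infty^2\int_{Q(x_k,\ell_1)}(Vv_k,v_k)\le\ell_1^{-n}\|\phi\|_\infty^2 M$, so $q_V(\psi_k)$ is bounded while $\|\psi_k\|_2=1$ and $\operatorname{supp}\psi_k$ escapes to infinity; hence $\{\psi_k\}$ is bounded in the form domain but has no $L^2$-convergent subsequence (it converges weakly to $0$), so the form domain does not embed compactly into $L^2$ and $H_V$ does not have discrete spectrum. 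Contrapositively, discreteness of $\sigma(H_V)$ forces $\lambda\big(\int_QV\big)\to+\infty$ as $Q\to\infty$ for every fixed side length.

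For \emph{sufficiency} the main tool is a matrix-valued Fefferman--Phong inequality: if $V\in A_{\infty,loc}(\R^n,H_d)$, there is $c>0$, depending only on $n$, $d$ and the constants in \eqref{scalar-Ainfty}, such that
\[
q_V(\psi)\ \ge\ c\int_{\R^n} m_V(x)^2\,|\psi(x)|^2\,dx\qquad\text{for all }\psi\in C_c^\infty(\R^n,\C^d),
\]
where $m_V(x)^{-1}:=\sup\big\{r\in(0,\ell_0]:\ r^{2-n}\lambda\big(\int_{B(x,r)}V\big)\le1\big\}$ is the Fefferman--Phong auxiliary function attached to the nonnegative set function $Q\mapsto\lambda\big(\int_QV\big)$. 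I would establish this by adapting the classical proof of the scalar Fefferman--Phong inequality (cf.\ \cite{auscher-benali}): localize $\psi$ by a Whitney-type partition of unity subordinate to the scale $m_V(\cdot)^{-1}$, so that the kinetic (commutator) errors at $x$ are $O(m_V(x)^2)$; on a cube $Q$ of the corresponding size, either the kinetic energy already dominates $m_V^2\int_Q|\psi|^2$, or a Poincar\'e inequality forces $\psi$ to be close to a constant vector $w$, whereupon the key use of \eqref{scalar-Ainfty} --- which yields a set $E\subset Q$ with $|E|\gtrsim|Q|$ on which $V(x)\succeq\frac{\delta}{|Q|}\int_QV$ \emph{as a quadratic form, hence for all directions of $w$ at once} --- gives $\int_Q(V\psi,\psi)\gtrsim\frac1{|Q|}\big(\int_QV\,w,w\big)\cdot|Q|\gtrsim m_V^2\int_Q|\psi|^2$; summing over the localization yields the global inequality. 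I expect this inequality to be the main obstacle: one must check that the scalar Poincar\'e-plus-$A_\infty$ scheme really survives for $\C^d$-valued $\psi$ whose direction varies over $Q$ (the crucial point being that the $A_\infty$ bound is an inequality of \emph{forms}, valid for every direction simultaneously), and one must secure the doubling and slow-variation properties of $m_V$ --- needed both for the Whitney localization and in the last step below --- which is delicate in view of the subtlety in the matrix $A_\infty$ definition discussed in Section~\ref{Ainfty-section}.

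To conclude, recall (Persson's theorem; see Section~\ref{f-a}) that $H_V$ has discrete spectrum if and only if $\inf\big\{q_V(\psi):\psi\in C_c^\infty(\R^n\setminus B_\rho,\C^d),\ \|\psi\|_2=1\big\}\to+\infty$ as $\rho\to\infty$, so by the displayed inequality it suffices to show $m_V(x)\to+\infty$ as $x\to\infty$ whenever the hypothesis holds. Assuming $\lambda\big(\int_{Q(x,\ell_1)}V\big)\to+\infty$, fix any $r>0$: comparing $B(x,r)$ with $Q(x,\ell_1)$ and, for small $r$, iterating the (reverse) doubling property of $V$ costs only a constant depending on $r/\ell_1$, so $\lambda\big(\int_{B(x,r)}V\big)\to+\infty$; hence $r^{2-n}\lambda\big(\int_{B(x,r)}V\big)>1$ for $x$ large, i.e.\ $m_V(x)^{-1}<r$ eventually. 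As $r>0$ is arbitrary, $m_V(x)\to+\infty$, which gives discreteness for the given $\ell_1$. Finally, chaining the two directions (``the condition for some $\ell_1$ $\Rightarrow$ discreteness $\Rightarrow$ the condition for every $\ell_1$'') produces the ``some (hence every)'' clause and completes the proof.
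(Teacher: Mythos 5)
Your necessity direction is essentially the paper's: both test the form on a compactly supported bump times a constant unit vector and then minimize over the direction to extract the minimal eigenvalue of $\int_Q V$. The only difference is cosmetic (you exhibit a bounded weakly-null sequence in the form domain with unit $L^2$ norm, the paper invokes Proposition~\ref{s-t-c-prop} directly); neither uses the $A_\infty$ hypothesis.

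The sufficiency direction takes a genuinely different route, and in your version the key lemma is left unproven. You reduce to a matrix Fefferman--Phong inequality with the auxiliary function $m_V$, and you correctly isolate the matrix content of the $A_\infty$ hypothesis (the bound $V(x)\ge\frac{\delta}{|Q|}\int_Q V$ on a large subset is an inequality of forms, hence simultaneous in all directions). But you also flag, and leave open, what you yourself call the main obstacle: the doubling and slow variation of $m_V$, needed for the variable-scale Whitney localization (and you would also need a convention for $m_V(x)$ when the defining set $\{r\le\ell_0:\,r^{2-n}\lambda(\int_{B(x,r)}V)\le1\}$ is empty). The paper's argument is designed precisely to bypass this machinery. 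It never constructs $m_V$: it sets $M(Q):=\ell_Q^{2-n}\int_Q V$, derives from \eqref{condition-b} a one-step doubling estimate $M(Q)\le D\,M(Q')$ for dyadic parent/child pairs, and, given $N$, iterates it $N$ times to pass from $M(Q)\ge D^N\mathbb{I}_d$ at the fixed scale $2^{N_0}$ (supplied by the hypothesis far from the origin) to $M(Q')\ge\mathbb{I}_d$ on all subcubes at the \emph{fixed} smaller scale $2^{N_0-N}$. On each such $Q'$ the $A_\infty$ condition \eqref{Ainfest} yields a set $E(Q')\subseteq Q'$ of proportional measure with $V\ge\delta\,4^{N-N_0}\mathbb{I}_d$ as a form, and integrating $|\psi(x)|^2\le2|\psi(x)-\psi(y)|^2+2|\psi(y)|^2$ over $Q'\times E(Q')$ together with Poincar\'e gives $\int_{Q'}|\psi|^2\le C\,4^{N_0-N}\bigl(\int_{Q'}|\nabla\psi|^2+\int_{Q'}(V\psi,\psi)\bigr)$; summing over $Q'$ is exactly the criterion \eqref{s-t-c}. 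In short, the paper trades the Whitney decomposition and the regularity of $m_V$ for a two-scale dyadic argument that needs only the doubling of $\int_Q V$ (hence of $\lambda(\int_Q V)$ by monotonicity of $\lambda$). Your Fefferman--Phong route would also work and would give a sharper pointwise lower bound, but as written the sufficiency direction rests on a lemma that is only sketched and explicitly acknowledged as incomplete.
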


An interesting point is that the scalar $A_{\infty,loc}$ class admits at least two equivalent definitions, but their natural matrix-valued analogues fail to be equivalent, giving two distinct classes of potentials. In Section \ref{Ainfty-section} we comment on this point, showing that the theorem above only holds for one of the two classes, and that is the reason of our choice of the definition of $A_{\infty,loc}(\R^n,H_d)$.

This is not the first time a matrix-valued analogue of a Muckenhoupt class appears in the literature. In order to study the vector analogue of the boundedness in weighted $L^2$ spaces of the Hilbert transform, Treil and Volberg introduced in \cite{treil-volberg} the matrix-valued analogue of the $A_2$ class. In Subsection \ref{treil-volberg-subsection} we show that the local version of their matrix class is contained in our $A_{\infty,loc}(\R^n,H_d)$, generalizing a well-known fact in the scalar theory.\newline

The leitmotiv of the rest of our paper is the comparison of the vector-valued operator $H_V=-\Delta+V$ with the scalar operator $H_\lambda=-\Delta+\lambda$, where $\lambda(x)$ is the minimal eigenvalue of $V(x)$. The following implication is elementary (see the comments after Proposition \ref{s-t-c-prop}):\bel\label{comparison}
H_\lambda \text{ has discrete spectrum}\quad\Longrightarrow \quad H_V \text{ has discrete spectrum.} \eel
In Section \ref{deg-section} we show that the reverse implication fails rather dramatically. This follows from Theorem \ref{deg-discrete} below, which provides a sufficient condition for discreteness of the spectrum that does not have any analogue in the scalar setting.

Before stating the result, we discuss the heuristics behind it. The energy of $\psi\in L^2(\R^n,\C^d)$ is the quadratic expression\be
\int_{\R^n}|\nabla\psi|^2+\int_{\R^n}(V\psi,\psi),
\ee where $|\nabla\psi|^2=\sum_{j=1}^n\sum_{k=1}^d|\partial_j\psi_k|^2$.
Showing the discreteness of the spectrum of $H_V$ amounts to proving that if $\psi$ has a large \emph{tail}, i.e., most of its $L^2$ norm is concentrated on the complement of a large ball, then it must have large energy (see Proposition \ref{s-t-c-prop} for the formal statement). For every point $x\in \R^n$, we have the orthogonal decomposition\be
\C^d=\mathcal{S}(x)\oplus\mathcal{L}(x),
\ee where $\mathcal{S}(x)$ is the direct sum of the eigenspaces of $V(x)$ corresponding to \emph{small} eigenvalues, while $\mathcal{L}(x)$ is the direct sum of the eigenspaces of $V(x)$ corresponding to \emph{large} eigenvalues. The problem is that the term $\int_{\R^n}(V\psi,\psi)$ may be small even for a $\psi$ with a large tail, if $\psi(x)$ is close to $\mathcal{S}(x)$ in an average sense. But if $\mathcal{S}(x)$ oscillates rapidly enough as a function of $x$, this can only happen if the term $\int_{\R^n}|\nabla\psi|^2$ is large, and hence only if the energy is large. 

To formalize this idea, in Subsection \ref{deg-subsection} we introduce and study the \emph{oscillation} of a measurable mapping $\mathcal{S}:\R^n\rightarrow\mathcal{V}(d)$, where $\mathcal{V}(d)$ is the set of non trivial subspaces of $\C^d$. Here by measurable we mean that there exists a finite collection of measurable vector fields $v_1,\dots,v_k:\R^n\rightarrow \C^d$ such that $\mathcal{S}(x)$ is the span of $\{v_1(x),\dots,v_k(x)\}$. The oscillation is defined for every cube $Q$ by the following formula:\be
\omega(Q,\mathcal{S}):=\inf\sqrt{\frac{1}{|Q|}\int_Q\left|v(x)-\frac{1}{|Q|}\int_Qv\right|^2dx},
\ee where the infimum is taken as $v:Q\rightarrow \C^d$ varies in the measurable unit sections of $\mathcal{S}$, i.e., the measurable mappings satisfying $|v(x)|=1$ and $v(x)\in\mathcal{S}(x)$ for almost every $x\in Q$. The quantity $\omega(Q,\mathcal{S})$ represents the $L^2$ distance of these sections from constant vector fields, and hence it is a measure of how much $\mathcal{S}$ oscillates on $Q$.

Since the discreteness of the spectrum has to do with the behavior of potentials at infinity, we introduce the following limit quantity\be
\omega_\infty(\ell,\mathcal{S})=\liminf_{x\in\Z^n,\ x\rightarrow\infty}\omega(\ell x+ [0,\ell]^n,\mathcal{S}) \qquad (\ell>0).
\ee The $\liminf$ is taken as the cubes go to infinity, while varying in what we call the \emph{canonical grid of step} $\ell$. We are finally in a position to state our sufficient condition for discreteness of the spectrum.
\begin{thm}\label{deg-discrete}
Let $V$ be a locally integrable non-negative matrix-valued potential. Assume that for every $x\in\R^n$ we have an orthogonal decomposition \be
\C^d=\mathcal{S}(x)\oplus\mathcal{L}(x),
\ee 
such that both components are measurable in $x$ and $V(x)$-invariant. If the following two properties hold:\begin{enumerate}
\item[\emph{(i)}] $\lim_{x\rightarrow\infty}\lambda(V(x)_{|\mathcal{L}(x)})=+\infty$,
\item[\emph{(ii)}] $\limsup_{\ell\rightarrow0+}\ell^{-1}\omega_\infty(\ell,\mathcal{S})=+\infty$,
\end{enumerate}
then $H_V$ has discrete spectrum.
\end{thm}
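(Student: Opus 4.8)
The plan is to reduce the theorem to the tail-versus-energy criterion of Proposition~\ref{s-t-c-prop}: it will suffice to show that for every $M>0$ there is $R>0$ such that
\be
\int_{\R^n}|\nabla\psi|^2+\int_{\R^n}(V\psi,\psi)\geq M\int_{\R^n}|\psi|^2
\ee
for all $\psi\in C^\infty_c(\R^n\setminus B_R,\C^d)$, where $B_R$ denotes the ball of radius $R$ about the origin. Write $\psi=\psi_{\mathcal{S}}+\psi_{\mathcal{L}}$ for the pointwise decomposition of $\psi$ along $\mathcal{S}(x)\oplus\mathcal{L}(x)$, which is measurable because the decomposition is. Since the two summands are mutually orthogonal and $V(x)$-invariant, $(V\psi,\psi)=(V\psi_{\mathcal{S}},\psi_{\mathcal{S}})+(V\psi_{\mathcal{L}},\psi_{\mathcal{L}})\geq\lambda(V(x)_{|\mathcal{L}(x)})|\psi_{\mathcal{L}}(x)|^2$; hypothesis (i) therefore lets us arrange that $\int_Q(V\psi,\psi)\geq N\int_Q|\psi_{\mathcal{L}}|^2$ with $N$ as large as we please, on every cube $Q$ far enough from the origin. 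What remains, and what carries all the weight, is to dominate $\int_Q|\psi_{\mathcal{S}}|^2$ by the Dirichlet energy on $Q$ together with $\int_Q|\psi_{\mathcal{L}}|^2$, with a constant governed by how fast $\mathcal{S}$ oscillates on $Q$.

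The key is a Poincar\'e-type inequality on a single cube $Q$ of side $\ell$: there is $c_n>0$ (depending only on $n$) such that
\be
\int_Q|\psi_{\mathcal{S}}|^2\leq\frac{c_n}{\omega(Q,\mathcal{S})^2}\left(\ell^2\int_Q|\nabla\psi|^2+\int_Q|\psi_{\mathcal{L}}|^2\right).
\ee
I would establish it in two steps. First, a purely geometric estimate for the orthogonal projection $P_{\mathcal{L}(x)}$ onto $\mathcal{L}(x)$ evaluated on a constant vector: for every $e\in\C^d$,
\be
\int_Q|P_{\mathcal{L}(x)}e|^2\,dx\geq\tfrac14\,\omega(Q,\mathcal{S})^2\,|e|^2\,|Q|.
\ee
To prove this, normalize $|e|=1$ and consider the measurable unit section $v(x):=P_{\mathcal{S}(x)}e/|P_{\mathcal{S}(x)}e|$ of $\mathcal{S}$, defined where $e\not\perp\mathcal{S}(x)$ and completed by an arbitrary measurable unit section of $\mathcal{S}$ elsewhere; an elementary computation gives the pointwise bound $|v(x)-e|\leq 2|P_{\mathcal{L}(x)}e|$ on $Q$, whence $\omega(Q,\mathcal{S})^2\leq|Q|^{-1}\int_Q|v-\bar v_Q|^2\leq|Q|^{-1}\int_Q|v-e|^2\leq 4|Q|^{-1}\int_Q|P_{\mathcal{L}(x)}e|^2$, which rearranges to the stated estimate. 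Second, the scalar Poincar\'e inequality (applied componentwise) shows that $\psi$ is $L^2(Q)$-close to its mean $\bar\psi_Q$, hence $\psi_{\mathcal{S}}$ is $L^2(Q)$-close to its own mean $w:=|Q|^{-1}\int_Q\psi_{\mathcal{S}}$, with $\int_Q|\psi_{\mathcal{S}}-w|^2\lesssim\ell^2\int_Q|\nabla\psi|^2+\int_Q|\psi_{\mathcal{L}}|^2$; since $\psi_{\mathcal{S}}(x)\in\mathcal{S}(x)$ we have $|P_{\mathcal{L}(x)}w|\leq|w-\psi_{\mathcal{S}}(x)|$, so the geometric estimate bounds $|Q|\,|w|^2$ by that same error divided by $\omega(Q,\mathcal{S})^2$, and plugging this into $\int_Q|\psi_{\mathcal{S}}|^2\leq 2\int_Q|\psi_{\mathcal{S}}-w|^2+2|Q|\,|w|^2$ (and using $\omega(Q,\mathcal{S})^2\leq 1$) gives the inequality.

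Granting the cube inequality, the theorem follows by playing (i) off against (ii). Given $M$, use (ii) to fix $\ell>0$ small enough that $\ell^{-1}\omega_\infty(\ell,\mathcal{S})\geq 2\sqrt{2c_nM}$; then every cube $Q$ of the canonical grid of step $\ell$ sufficiently far from the origin has $\omega(Q,\mathcal{S})^2\geq 2c_nM\ell^2$, so the cube inequality yields $M\int_Q|\psi_{\mathcal{S}}|^2\leq\tfrac12\int_Q|\nabla\psi|^2+\tfrac1{2\ell^2}\int_Q|\psi_{\mathcal{L}}|^2$. Now use (i) to fix $R$ so large that $\lambda(V(x)_{|\mathcal{L}(x)})\geq N:=\tfrac1{2\ell^2}+M$ for $|x|\geq R$ and that every grid cube meeting $\R^n\setminus B_R$ is one of these far cubes. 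For $\psi$ supported in $\R^n\setminus B_R$ we obtain, cube by cube,
\be
M\int_Q|\psi|^2\leq\tfrac12\int_Q|\nabla\psi|^2+N\int_Q|\psi_{\mathcal{L}}|^2\leq\int_Q|\nabla\psi|^2+\int_Q(V\psi,\psi),
\ee
and summing over the finitely many grid cubes meeting the support of $\psi$ gives the required lower bound on the energy.

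The only genuinely new ingredient is the geometric estimate for $\int_Q|P_{\mathcal{L}(x)}e|^2$, which is the one place where the definition of the oscillation enters in an essential way; everything downstream of it is elementary. I expect the remaining difficulties to be bookkeeping: checking that the class of measurable subspace-valued maps is regular enough for $P_{\mathcal{S}(x)}$, $P_{\mathcal{L}(x)}$ and the sections above to be genuinely measurable (this should be part of the foundational material on $\omega$ in Subsection~\ref{deg-subsection}), and matching the precise formulation of Proposition~\ref{s-t-c-prop} with the ``support outside a ball'' version used here, which is the standard Persson-type reformulation and costs only a routine cutoff argument.
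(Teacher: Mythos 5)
Your proof is correct, and it takes a genuinely different route from the paper's. The paper writes $\psi=\phi u$ with $\phi=|\psi|$ scalar and $u$ of unit norm, rewrites the energy as $\int|\nabla\phi|^2+\int\widetilde V\phi^2$ with effective potential $\widetilde V=(Vu,u)+|\nabla u|^2$, and then runs a Maz'ya--Shubin-style capacity argument on each cube $Q$: it introduces the compact set $F=\{\phi^2\le\frac1{4|Q|}\int_Q\phi^2\}$, splits on whether $\mathrm{Cap}(F)$ is large or small, and in the hard sub-case shows that $\frac1{|Q|}\int_{Q\setminus F}|\nabla u|^2$ is bounded below by $c_n\ell^{-2}\omega_\infty(\ell)^2$, using the oscillation $\omega(Q,\mathcal S)$ through a competitor-section and cut-off argument. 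You instead work directly with the orthogonal decomposition $\psi=\psi_{\mathcal S}+\psi_{\mathcal L}$, prove the clean geometric lower bound $\int_Q|P_{\mathcal L(x)}e|^2\,dx\ge\tfrac14\omega(Q,\mathcal S)^2|Q|\,|e|^2$ (your pointwise estimate $|v(x)-e|\le 2|P_{\mathcal L(x)}e|$ checks out), and combine it with the ordinary $L^2$ Poincar\'e inequality on $Q$ to get $\int_Q|\psi_{\mathcal S}|^2\le\frac{c_n}{\omega(Q,\mathcal S)^2}\bigl(\ell^2\int_Q|\nabla\psi|^2+\int_Q|\psi_{\mathcal L}|^2\bigr)$; playing (i) against (ii) then gives the per-cube energy lower bound. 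This bypasses both the polar decomposition (and its singularity at zeros of $\psi$) and the capacity technology of Kondrat'ev--Maz'ya--Shubin, so it is noticeably more elementary while proving the same per-cube estimate with the same quantitative dependence $\ell^{-2}\omega_\infty(\ell)^2$. One small streamlining you could make: the Persson-type reformulation (``$\psi$ supported outside $B_R$'') and the associated cutoff are unnecessary, since the per-cube estimate, summed over grid cubes meeting $\{|y|\ge R\}$, already yields the tail estimate \eqref{s-t-c} of Proposition~\ref{s-t-c-prop} directly, exactly as in the paper.
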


Notice that $\lambda(V(x)_{|\mathcal{L}(x)})$ denotes the minimal eigenvalue of the restriction of $V(x)$ to $\mathcal{L}(x)$, which is well-defined by the $V(x)$-invariance of this subspace.

The first condition expresses the fact that $\mathcal{L}(x)$ is a direct sum of eigenspaces corresponding to large eigenvalues of $V(x)$, while the second tells that $\mathcal{S}$ should oscillate at infinity and that, when looking at smaller and smaller scales, the oscillation should not decay too fast.

In Subsection \ref{example-deg-subsection} we show how to build potentials with rank identically equal to $1$ which satisfy the hypotheses of Theorem \ref{deg-discrete}, giving in particular a strong counterexample to the reverse implication of \eqref{comparison}.\newline

Our last result is complementary to Theorem \ref{deg-discrete} and shows that the implication \eqref{comparison} may be reversed for $2\times 2$ real polynomial non-negative potentials, i.e., non-negative potentials whose matrix entries are real-valued polynomials.

\begin{thm}\label{pol2}
If $V$ is a $2\times2$ real polynomial non-negative potential, then \bel\label{bicomparison}
H_V \text{ has discrete spectrum}\quad\Longleftrightarrow\quad  H_\lambda \text{ has discrete spectrum}. \eel
\end{thm}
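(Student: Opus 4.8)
The implication ``$H_\lambda$ has discrete spectrum $\Rightarrow$ $H_V$ has discrete spectrum'' is exactly \eqref{comparison}, so the content of the theorem is the converse, which I prove by contraposition. Assume $H_\lambda$ does not have discrete spectrum. By Proposition \ref{s-t-c-prop} applied to the scalar potential $\lambda$, the strong tail condition fails for $\lambda$, and after the standard normalization and localization manipulations this yields real functions $\psi_j\in C_c^\infty(\R^n,\R)$ with $\|\psi_j\|_{L^2}=1$, $\operatorname{dist}(\operatorname{supp}\psi_j,0)\to\infty$, and $\int|\nabla\psi_j|^2+\int\lambda|\psi_j|^2\le C$. (Alternatively one may invoke the scalar Maz'ya--Shubin criterion, Theorem \ref{maz'ya-shubin}, to produce the $\psi_j$ with explicit control of their supports relative to the sublevel sets of $\lambda$.) The goal is to upgrade these to vector test functions $\Psi_j\in C_c^\infty(\R^n,\R^2)$ with $\|\Psi_j\|_{L^2}\asymp1$, supports still escaping to infinity, and $\int|\nabla\Psi_j|^2+\int(V\Psi_j,\Psi_j)\le C'$, which by Proposition \ref{s-t-c-prop} shows that $H_V$ is not discrete either.

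The natural candidate is $\Psi_j:=\psi_j\,e$, where $e(x)$ is a \emph{real} unit eigenvector of $V(x)$ for the minimal eigenvalue $\lambda(x)$ --- well defined and measurable on the open set $\{V\ne\lambda I\}$, and glued across local choices by a partition of unity (the mild monodromy issue of $e$ being harmless since the quantities we estimate depend only on the eigenline $\mathcal{S}(x)=\R e(x)$). Because $\psi_j$ and $e$ are real and $|e|\equiv1$, the cross terms in the expansion of $|\nabla(\psi_j e)|^2$ carry the factor $e\cdot\partial_k e=\tfrac12\partial_k|e|^2=0$ and disappear, leaving
\[
\int(V\Psi_j,\Psi_j)=\int\lambda|\psi_j|^2\le C,\qquad \int|\nabla\Psi_j|^2=\int|\nabla\psi_j|^2+\int|\psi_j|^2|\nabla e|^2 .
\]
Thus the whole problem collapses to the estimate $\int|\psi_j|^2|\nabla e|^2\le C'$, i.e., to controlling the oscillation of the eigenline $\mathcal{S}$ over the supports of the $\psi_j$.

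Here the $2\times2$ structure is used concretely. Writing $V=\begin{pmatrix}a&b\\ b&c\end{pmatrix}$ with $a,b,c\in\R[x_1,\dots,x_n]$, the eigenvectors of $V$ at $x$ are $(\cos\phi,\sin\phi)$ and $(-\sin\phi,\cos\phi)$ with $2\phi(x)=\arg\!\big((a(x)-c(x))+2ib(x)\big)$; taking $e$ to be the one with eigenvalue $\lambda$ gives $|\nabla e|=|\nabla\phi|$ and, since $(a-c)^2+4b^2=(\Lambda-\lambda)^2$ (with $\Lambda$ the maximal eigenvalue),
\[
|\nabla\phi|\ \le\ \frac{\big|(a-c)\,\nabla(2b)-2b\,\nabla(a-c)\big|}{2\big((a-c)^2+4b^2\big)}\ \le\ \frac{C_V\,(1+|x|)^{m}}{(\Lambda-\lambda)^2},
\]
where $m$ and $C_V$ depend only on $V$. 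The denominator degenerates precisely on the coincidence set $Z=\{a=c\}\cap\{b=0\}=\{\lambda=\Lambda\}=\{V=\lambda I\}$, an algebraic subset of $\R^n$, and turning this pointwise bound into the desired integral bound is the heart of the matter.

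The main obstacle, then, is a quantitative control of $|\nabla e|$ near $Z$ that exploits both that $V$ is polynomial and that $V\ge0$; I would handle it by a two-region decomposition. Away from $Z$, a \L ojasiewicz-type inequality at infinity (available because $\Lambda-\lambda$, $\lambda$ and $\operatorname{dist}(\cdot,Z)$ are semialgebraic of controlled complexity) bounds $\Lambda-\lambda$ below by a fixed negative power of $\operatorname{dist}(\cdot,Z)$ times a power of $(1+|x|)$; combined with $\int\lambda|\psi_j|^2\le C$ and the non-negativity identities $\lambda+\Lambda=\operatorname{tr}V$, $\lambda\Lambda=\det V$ (which prevent $\lambda$ from being small where $\Lambda-\lambda$ is small unless $V$ itself is small), this should yield the bound on $\int|\psi_j|^2|\nabla e|^2$ over that region. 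On the complementary neighborhood of $Z$, where $\|V-\lambda I\|=\Lambda-\lambda$ is small, one instead replaces $\psi_j e$ by $\psi_j v$ for a fixed constant unit vector $v$, paying $(Vv,v)|\psi_j|^2\le\lambda|\psi_j|^2+(\Lambda-\lambda)|\psi_j|^2$ in the potential energy and a cutoff term $|\nabla\chi|^2|\psi_j|^2$ across the transition; estimating the latter again requires a \L ojasiewicz bound for the width of the transition region. Patching the two regimes with a partition of unity and summing gives the sought $\Psi_j$. Finally, the degenerate case $\det V\equiv0$ (so $\lambda\equiv0$ and $H_\lambda=-\Delta$ is not discrete) fits the same scheme: one takes $e$ in the pointwise kernel of $V$, so that $\int(V\Psi_j,\Psi_j)\equiv0$ and only $\int|\psi_j|^2|\nabla e|^2$ must be controlled; for a rank-one polynomial potential, which necessarily has the form $V=g\,uu^{T}$ with $u$ a polynomial vector field and $g\ge0$, this reduces to the boundedness at infinity of the derivative of the rational Gauss map $x\mapsto[\,u(x)\,]\in\mathbb{RP}^1$, again a fact of \L ojasiewicz type.
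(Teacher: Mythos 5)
Your proposal tackles the nontrivial implication by contraposition (``$H_\lambda$ not discrete $\Rightarrow$ $H_V$ not discrete''), upgrading scalar test functions $\psi_j$ from the Maz'ya--Shubin characterization of $\lambda$ to vector-valued ones $\Psi_j=\psi_j e$, with $e$ a minimal-eigenvector field. This is a genuinely different route from the paper, which argues the direct implication by fixing a far-away cube $Q(x,\ell)$, extracting a comparable subcube $Q(y,s)$ via the dichotomy of Lemma \ref{good-bad} (either ``good'': $\mu\leq 8\lambda$, or ``bad'': $\mu\geq2\lambda$ and $\mu$ approximately constant), and testing the $H_V$-discreteness inequality with $\eta_\delta(1-P)e_1$ (good case) or $\eta_\delta(1-P)v$ (bad case, $v$ the eigenvector). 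Your ``near $Z$'' / ``away from $Z$'' split is the same high-level dichotomy, but your execution has a gap that the paper's cube-local argument is specifically designed to avoid.

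The gap is the uniform-in-$j$ bound on $\int|\psi_j|^2|\nabla e|^2$ together with the cutoff costs of your two-region patching. Your pointwise bound $|\nabla e|\lesssim(1+|x|)^m/(\mu-\lambda)^2$ (you can actually improve the exponent to $1$ since the numerator carries a factor $\sqrt{(a-c)^2+4b^2}=\mu-\lambda$, but this does not change the difficulty), combined with a \L{}ojasiewicz inequality at infinity, gives $|\nabla e|^2\lesssim(1+|x|)^{M}\operatorname{dist}(x,Z)^{-a}$ for some positive $M,a$. Since $\operatorname{supp}\psi_j\to\infty$, the $(1+|x|)^M$ factor is unbounded over $\bigcup_j\operatorname{supp}\psi_j$, and the only available control on $\psi_j$ ($\|\psi_j\|_2=1$, $\int|\nabla\psi_j|^2\leq C$, $\int\lambda|\psi_j|^2\leq C$) gives no decay of $|\psi_j|^2$ near $Z$: in the ``away'' region you only know $\mu-\lambda$ is not small, which says nothing about $\mu$ itself, and $\int\lambda|\psi_j|^2\leq C$ does not make $|\psi_j|^2$ small where $\lambda$ is small. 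Choosing a fixed threshold $\delta$ for ``near $Z$'' leaves the away-region contribution $\lesssim\delta^{-2}\int|\psi_j|^2(1+|x|)^{2D-2}$ unbounded as $j\to\infty$, while letting $\delta$ grow with $j$ makes the near-region potential cost $\int(\mu-\lambda)|\psi_j|^2\lesssim\delta$ blow up; and in either case the cutoff energy across the transition shell is uncontrolled. The paper sidesteps all of this: Lemma \ref{good-bad} produces a subcube of \emph{comparable} side on which $\mu-\lambda\geq\mu/2$ and $\mu$ is within a factor $4$ of its supremum, so the pointwise bound $(\mu-\lambda)|\partial_jv|\leq\|\partial_jV\|_{\mathrm{op}}$ (itself a consequence of $d=2$: $\partial_jv\perp v$ forces $\partial_jv$ to be the $\mu$-eigenvector) together with the scale-invariant polynomial estimate of Lemma \ref{grad-poly-pot} yields $\int_{Q'}|\nabla v|^2\leq C\ell^{n-2}$ with $C$ depending only on $V$ --- a bound that is uniform precisely because $\mu$ is nearly constant there. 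Your global framework loses exactly this control, and neither the \L{}ojasiewicz mechanism nor the nonnegativity identities restore it. The closing remark about the rank-one case and the Gauss map has the same issue: the derivative of $x\mapsto[u(x)]$ blows up on the common zero set of the components of $u$, which need not be compact, so ``bounded at infinity'' is not automatic.
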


The proof is based on a dichotomy (Lemma \ref{good-bad}) that allows, at every scale, to deal only with the \emph{good} cubes on which the two eigenvalues of $V$ are everywhere comparable, and the \emph{bad} ones on which the maximal eigenvalue is everywhere significantly larger than the smaller one. On the good cubes the operator behaves similarly to a scalar operator, and we can use the techniques of \cite{ma-sh}. The bad cubes are those on which the non-scalar nature of the operator becomes more apparent. The key to the proof is the observation that we can take advantage of the incomparability of eigenvalues to prove a gradient estimate for the eigenvectors (inequality \eqref{grad-v}). At this point we use in a crucial way the fact that $d=2$. The proof relies on several rigidity properties of $V$, which automatically hold for polynomials. It would be interesting to know if the result may be extended to $d\geq3$, and to study the class of potentials (for general $d$) for which \eqref{bicomparison} holds. \newline

\par We conclude the paper with Section \ref{final}, in which we explain why a natural extension of Molchanov and Maz'ya-Shubin characterization cannot hold for non-negative matrix-valued potentials, a further evidence of the new phenomena that appear when $d\geq2$. 

\subsection{Notation}\label{notation} 
If $W\in L^1(E,H_d)$ ($E\subseteq\R^n$), i.e., the space of integrable $H_d$-valued functions, then the integrability amounts, by polarization, to the integrability of $(Wv,v)$ for every $v\in\C^d$. We have $\left(\left(\int_EW\right)v,v\right)=\int_E\left(Wv,v\right)$ for every $v\in\C^d$. 

If $A,B\in H_d$, $A\leq B$ stands, as usual, for the inequality between the corresponding quadratic forms, i.e., $B-A\in H_d^{\geq0}$. 

If $W,W'\in L^1(E,H_d)$ and $W\leq W'$ on $E$, then $\int_EW\leq\int_EW'$ as quadratic forms. 

If $Q$ is a cube, by which we always mean a closed cube, then $\lambda Q$ is the cube with the same center and side multiplied by the factor $\lambda>0$. The side of $Q$ is denoted by $\ell_Q$ and $Q(x,\ell)$ is the cube of center $x$ and sides of length $\ell$ parallel to the axes. We denote by $\mathcal{G}_\ell$ ($\ell>0$) the \emph{canonical grid of step} $\ell$, i.e., the family of cubes of the form $\ell x+[0,\ell]^n$, where $x\in \Z^n$. The elements of $\cup_{N\in\Z}\mathcal{G}_{2^N}$ are the usual dyadic cubes.

ln this paper $C_n$ and $c_n$ are positive constants depending only on the dimension $n$, whose exact value may change from line to line. 

\section{Definition of $H_V$}\label{f-a}

In this section we assume that $V\in L^1_{loc}(\R^n,H_d^{\geq0})$, i.e., $V\in L^1_{loc}(\R^n,H_d)$ and $V\geq0$ everywhere. We show that the Schr\"odinger operator $H_V$ is self-adjoint on an appropriate domain by the classical quadratic form method. Our aim is to sketch the adjustments required for matrix potentials, and to state the basic characterization of discreteness of the spectrum that will be used throughout the paper. We start with the energy space
\be
E_V:=\{\psi\in L^2(\R^n,\C^d):\ \partial_j\psi\in L^2(\R^n,\C^d) \ \forall j=1,\dots,n,\quad (V\psi,\psi)\in L^1(\R^n)\},
\ee where $\partial_j$ is the distributional partial derivative. If $\psi\in E_V$, the energy\be\mathcal{E}_V(\psi):=\int_{\R^n}|\nabla\psi|^2+\int_{\R^n}(V\psi,\psi)\ee is well-defined. The expression $||\psi||_V:=\sqrt{\mathcal{E}_V(\psi)+||\psi||^2}$, where $||\cdot||$ is the $L^2(\R^n,\C^d)$ norm, is a Hilbert space norm on $E_V$. 

The subspace $C^\infty_c(\R^n,\C^d)$ is dense in $(E_V,||\cdot||_V)$.  This can be seen in the following three steps.
\begin{enumerate}
\item[(a)] Compactly supported elements of $E_V$ are dense: if $\psi\in E_V$ and $\eta\in C^\infty_c(\R^n)$ is such that $\eta(0)=1$, then $\eta(\eps x)\psi(x)$ converges to $\psi$ in $E_V$ when $\eps\rightarrow0$.
\item[(b)] Bounded elements of $E_V$ are dense: if $\psi\in E_V$, then defining $\psi_R(x):=\psi(x)$ when $|\psi(x)|\leq R$, and $\psi_R(x):=R\frac{\psi(x)}{|\psi(x)|}$ otherwise, one has $\psi_R\rightarrow \psi$ in $E_V$, by the chain rule for Sobolev spaces and dominated convergence.
\item[(c)] Test functions are dense: if $\psi\in E_V$ is bounded and compactly supported and $\{\varphi_\eps\}_{\eps>0}$ is a scalar approximate identity, then $\varphi_\eps*\psi\rightarrow\psi$ in $E_V$.\end{enumerate}
The details are routine arguments.

We define $H_V:=-\Delta+V$, where $\Delta$ acts componentwise and distributionally, on the domain\be
D(H_V):=\{\psi\in E_V:\ -\Delta\psi+V\psi\in L^2(\R^n,\C^d)\}.
\ee Thanks to the facts established above, the classical arguments (cf. \cite{fukushima}) show that $H_V$ is self-adjoint and non-negative. 
Moreover, we have a classical characterization of discreteness of the spectrum, whose proof is identical to the one in the scalar case, as can be found, e.g., in \cite{ko-sh}, p. 190-191.

\begin{prop}\label{s-t-c-prop}
$H_V$ has discrete spectrum if and only if for every $\eps>0$ there is $R<+\infty$ such that 
\bel\label{s-t-c}
\int_{|x|\geq R}|\psi|^2\leq \eps\cdot \mathcal{E}_V(\psi)\qquad\forall \psi\in C^\infty_c(\R^n,\C^d).
\eel
If \eqref{s-t-c} holds, it automatically extends to every $\psi\in E_V$. 
\end{prop}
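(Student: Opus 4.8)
The plan is to prove the classical form--theoretic characterization exactly as in the scalar case, observing at the outset that the matrix potential enters only through the non-negative quadratic form $\psi\mapsto\int_{\R^n}(V\psi,\psi)$, so that the only analytic input --- the Rellich--Kondrachov theorem --- is applied componentwise and nothing genuinely new occurs. I would split the argument into four steps: (1) \eqref{s-t-c} implies that the closed unit ball $\mathcal{B}_V:=\{\psi\in E_V:\ \|\psi\|_V\le1\}$ is precompact in $L^2(\R^n,\C^d)$; (2) that precompactness implies $H_V$ has discrete spectrum; (3) conversely, discreteness of the spectrum implies \eqref{s-t-c} for $\psi\in C^\infty_c(\R^n,\C^d)$; (4) the inequality then extends to all of $E_V$ by density.

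For Step 1, given $\eta>0$ I would apply \eqref{s-t-c} with $\eps=\eta^2/2$ to obtain $R$ with $\int_{|x|\ge R}|\psi|^2\le\eta^2/2$ for all $\psi\in\mathcal{B}_V$ (since $\mathcal{E}_V(\psi)\le1$ there). The elements of $\mathcal{B}_V$ are bounded in $H^1(\R^n,\C^d)$, so their restrictions to the ball $\{|x|<R\}$ form a precompact subset of $L^2(\{|x|<R\},\C^d)$ by Rellich--Kondrachov applied componentwise; combining a finite $(\eta/\sqrt2)$-net for those restrictions, extended by zero to $\R^n$, with the tail bound and the identity $\|\psi-g\|_{L^2(\R^n)}^2=\|\psi-g\|_{L^2(\{|x|<R\})}^2+\|\psi\|_{L^2(\{|x|\ge R\})}^2$ yields a finite $\eta$-net for $\mathcal{B}_V$ in $L^2(\R^n,\C^d)$, hence its total boundedness. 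For Step 2, I would check that $T:=(H_V+1)^{-1}$ is a contraction from $L^2(\R^n,\C^d)$ into $(E_V,\|\cdot\|_V)$: if $\phi=Tf$ then, since $H_V$ is the self-adjoint operator associated with the form $\mathcal{E}_V$, one has $\|\phi\|_V^2=\mathcal{E}_V(\phi)+\|\phi\|^2=\langle f,\phi\rangle\le\|f\|\,\|\phi\|_V$, so $\|Tf\|_V\le\|f\|$. Thus $T$ factors through the inclusion $(E_V,\|\cdot\|_V)\hookrightarrow L^2(\R^n,\C^d)$, which is compact by Step 1, so $T$ is compact and the non-negative self-adjoint operator $H_V$ has discrete spectrum.

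For Step 3, I would use an orthonormal basis $\{\phi_k\}_{k\ge1}$ of $L^2(\R^n,\C^d)$ with $H_V\phi_k=\mu_k\phi_k$ and $0\le\mu_1\le\mu_2\le\cdots\to\infty$, and first record that $\mu_1>0$: if $H_V\phi=0$ then $\mathcal{E}_V(\phi)=0$, hence $\nabla\phi=0$ a.e., hence $\phi$ is constant, hence $\phi=0$ in $L^2(\R^n)$. Then, for $\psi\in C^\infty_c(\R^n,\C^d)$ with coefficients $c_k=\langle\psi,\phi_k\rangle$, so that $\|\psi\|^2=\sum_k|c_k|^2$ and $\mathcal{E}_V(\psi)=\sum_k\mu_k|c_k|^2$, I would choose $N$ with $\mu_{N+1}>4/\eps$ and split off the low modes:
\[
\int_{|x|\ge R}|\psi|^2\le 2\int_{|x|\ge R}\Bigl|\textstyle\sum_{k\le N}c_k\phi_k\Bigr|^2+2\sum_{k>N}|c_k|^2.
\]
The second term is at most $2\mu_{N+1}^{-1}\mathcal{E}_V(\psi)\le\tfrac\eps2\mathcal{E}_V(\psi)$; the first is, by Cauchy--Schwarz, at most $2\bigl(\sum_{k\le N}|c_k|^2\bigr)\sum_{k\le N}\int_{|x|\ge R}|\phi_k|^2\le 2\mu_1^{-1}\mathcal{E}_V(\psi)\sum_{k\le N}\int_{|x|\ge R}|\phi_k|^2$, which I make $<\tfrac\eps2\mathcal{E}_V(\psi)$ by enlarging $R$, this being a finite sum of $L^2$ tails. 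Adding the two bounds gives \eqref{s-t-c}. Finally, for Step 4, both $\psi\mapsto\int_{|x|\ge R}|\psi|^2$ and $\psi\mapsto\mathcal{E}_V(\psi)=\|\psi\|_V^2-\|\psi\|^2$ are $\|\cdot\|_V$-continuous, so \eqref{s-t-c} propagates from $C^\infty_c(\R^n,\C^d)$ to its closure $E_V$, using the density established above.

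I do not expect a genuine obstacle --- the paper itself notes the proof is identical to the scalar one --- but the step deserving most care is Step 3, and within it the observation that $H_V$ has no zero eigenvalue: without $\mu_1>0$ one cannot control $\|\psi\|^2$ by $\mathcal{E}_V(\psi)$, the low-mode estimate breaks, and in fact \eqref{s-t-c} would be false for any kernel element. The only other point to keep in mind is that in Steps 1--2 the form $\mathcal{E}_V$ does not split as a sum of scalar forms (because of the off-diagonal entries of $V$); this is harmless, since the compactness is extracted solely from the gradient term $\int_{\R^n}|\nabla\psi|^2=\sum_{j,k}|\partial_j\psi_k|^2$, which does split componentwise.
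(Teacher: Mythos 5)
Your proof is correct and is exactly the standard form-theoretic argument that the paper itself delegates to the scalar case (citing Kondrat'ev--Shubin rather than reproducing it): Rellich--Kondrachov plus the tail bound for total boundedness of the form-unit ball, compactness of $(H_V+1)^{-1}$ via factorization through the embedding $E_V\hookrightarrow L^2$, eigenfunction expansion for the converse, and density of $C^\infty_c$ in $E_V$ for the final extension. In particular your care in verifying $\mu_1>0$ (so that the low-mode term is controlled by $\mathcal{E}_V(\psi)$) and in noting that compactness is extracted from $\int|\nabla\psi|^2$, which does split componentwise even though $\mathcal{E}_V$ does not, are precisely the two points where the scalar argument must be re-read in the vector setting, and you handled both correctly.
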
 

Proposition \ref{s-t-c-prop} immediately gives the implication \eqref{comparison} of the Introduction. First of all, notice that if $\lambda(x)$ is the minimal eigenvalue of $V(x)$ and $V\in L^1(\R^n,H_d^{\geq0})$, then $\lambda\in L^1(\R^n)$, because $\lambda(x)=\inf (V(x)u,u)$, where the $\inf$ can be taken over all vectors $u$ in a countable dense subset of the unit sphere of $\C^d$. If $\psi\in E_V$, then $|\psi|\in E_\lambda$ and $\mathcal{E}_\lambda(|\psi|)\leq \mathcal{E}_V(\psi)$, by well-known properties of Sobolev spaces (see, e.g., \cite[Ch. 4 Thm.4]{evans-gariepy}). This completes the proof of the implication \eqref{comparison}.

\section{$A_\infty$-conditions for matrix-valued functions\\ and discreteness of the spectrum}\label{Ainfty-section}

\subsection{A matrix-valued analogue of $A_{\infty,loc}(\R^n)$}

\begin{dfn}\label{Ainfloc} A function $W\in L^1_{\text{loc}}(\R^n,H_d^{\geq0})$ is in the class $A_{\infty,loc}(\R^n,H_d)$ if it satisfies the following property: there exist $\ell_0,\delta,c>0$ such that
\bel\label{Ainfest}
\left|\left\{x\in Q: W(x)\geq \frac{\delta}{|Q|}\int_QW\right\}\right|\geq c|Q|
\eel 
holds for every cube $Q$ such that $\ell_Q\leq \ell_0$.
\end{dfn}

In the scalar theory (see, e.g., \cite[Ch. 9]{grafakos}), one defines $A_\infty(\R^n)$, requiring \eqref{Ainfest} to hold for every cube $Q$. Requiring it only for small cubes, one obtains the larger class $A_{\infty,loc}(\R^n)$, which is the relevant one for our problem, as we pointed out in the Introduction.  

In the scalar case, we have the equivalent characterization: $W\in A_{\infty,loc}(\R^n)$ if and only if there exist $\alpha,\beta\in(0,1)$ and $\ell_0>0$ such that for every cube $Q$ of side $\ell_Q\leq \ell_0$ the following holds:
\bel\label{condition-b}
\forall A\subseteq Q\text{ measurable }\colon\quad|A|\geq\alpha|Q|\Longrightarrow \int_A W\geq\beta \int_Q W.
\eel A proof of the analogous equivalence of characterizations for $A_\infty(\R^n)$ can be found in \cite[Thm. 9.3.3]{grafakos}, but the same argument works in the local case too. Condition \eqref{condition-b} is meaningful also in the matrix-valued case, but it turns out to be weaker than condition \eqref{Ainfest} if $d\geq2$. To see this, we need the following brief discussion.

First of all, every $W\in A_{\infty,loc}(\R^n,H_d)$ satisfies condition \eqref{condition-b} with $\alpha=1-c/2$ and $\beta=c\delta/2$, with $c$ and $\delta$ as in Definition \ref{Ainfloc}. In fact, the intersection of $A$ with the set on the left of \eqref{Ainfest} has measure $\geq \frac{c}{2}|Q|$, hence $\int_AW\geq \frac{c\delta}{2}\int_QW$. 

Secondly, consider the set $\mathcal{W}_{n,d}$ of functions $W:\R^n\rightarrow H_d^{\geq0}$ satisfying the following properties:\begin{enumerate}
\item[(a)] the entries of $W$ are polynomials,
\item[(b)] $\det(W)\equiv0$,
\item[(c)] there is no $u\in \C^d\setminus\{0\}$ for which $Wu$ is identically zero.
\end{enumerate} 
Notice that $\mathcal{W}_{n,1}=\varnothing$, but $\mathcal{W}_{n,d}\neq\varnothing$ when $d\geq2$. An example of an element of $\mathcal{W}_{1,2}$ is $W_0(x)= \begin{bmatrix}
    1 & x \\
    x & x^2 \\
  \end{bmatrix}.$
We claim that if $W\in\mathcal{W}_{n,d}$, then there are $\alpha$ and $\beta$ such that it satisfies \eqref{condition-b} for every cube $Q$, but $W\notin A_{\infty,loc}(\R^n,H_d)$.

Let us prove the claim. Since scalar non-negative polynomials are in $A_\infty(\R^n)$ with constants depending only on the degree (see \cite[Section $2$]{ricci-stein}), we can apply \eqref{condition-b} to the family $\{(Wu,u)\}_{u\in\C^d}$, thus obtaining \eqref{condition-b} for any matrix-valued polynomial $W$. Next, observe that if $W\in\mathcal{W}_{n,d}$, then $\int_QW>0$ for every cube $Q$. In fact, if this was not the case, there would be $u\in\C^d\setminus\{0\}$ such that $\int_Q(Wu,u)=0$. The non-negativity of $W$ would then force $(Wu,u)$ to vanish identically on $Q$, and hence on $\R^n$, because it is a polynomial. This contradicts property $(c)$ above. Since $\det(W)\equiv0$ and $\int_QW>0$ on every cube, $W$ cannot satisfy \eqref{Ainfest}, thus proving the claim. \newline

\par Before stating and proving the characterization of discreteness of the spectrum for $A_{\infty,loc}$ potentials, we recall a few basic properties of $A_{\infty,loc}$, which continue to hold in the matrix setting. We begin with a consequence of inequality \eqref{condition-b}. Let $W\in L^1_{loc}(\R^n,H_d^{\geq0})$ be a function satisfying \eqref{condition-b} with constants $\ell_0$, $\alpha$ and $\beta$. If $Q''\subseteq Q'$ are cubes, $\ell_{Q'}\leq\ell_0$, and $|Q''|\geq \alpha'|Q'|$ ($0<\alpha'<\alpha$), we can find a sequence of nested cubes 
\be Q_0=Q''\subset Q_1\subset\cdots\subset Q_N=Q',\ee 
such that $|Q_k|\geq \alpha |Q_{k+1}|$ for $k=1,\dots,N-1$, and $N$ depends on $\alpha'$, $\alpha$ and $n$. Applying \eqref{condition-b} to $A=Q_k$ and $Q=Q_{k+1}$ and composing the resulting chain of inequalities, we obtain \bel\label{alphasmall}\int_{Q''}W\geq \beta' \int_{Q'}W,\eel where $\beta'$ depends on $\eps$, $\alpha$, $\beta$ and $n$. In other words, if $W$ satisfies \eqref{condition-b}, one may assume $\alpha$ arbitrarily close to $0$, at the expense of reducing the value of $\beta$. By an analogous argument, one may also increase the value of $\ell_0$ for which \eqref{condition-b} and \eqref{alphasmall} hold (at the expense of modifying the other parameters).

A consequence of this is that the precise value of $\ell_0$ in Definition \ref{Ainfloc} is inessential. In fact, if $W$ satisfies the condition of Definition \ref{Ainfloc} and $\ell_1>\ell_0$ is fixed, we can partition every cube $Q$ of side less than $\ell_1$ in $N$ smaller cubes of side less than $\ell_0$, where $N$ depends on $n$, $\ell_1$ and $\ell_0$. Applying \eqref{Ainfest} to the smaller cubes and recalling \eqref{alphasmall}, we see that $W$ satisfies \eqref{Ainfest} with $\ell_0$ replaced by $\ell_1$, and $\delta$ replaced by a smaller value depending on $\ell_1$. Since property \eqref{condition-b} follows from \eqref{Ainfest}, the value of $\ell_0$ in property \eqref{condition-b} may be analogously increased.

We can finally state the main result of this section.

\begin{thm}\label{Ainfty-thm} Let $V\in L^1_{\text{loc}}(\R^n,H_d^{\geq0})$. Consider the following conditions:
\begin{enumerate}
\item[\emph{(i)}] $-\Delta+V$ has discrete spectrum,
\item[\emph{(ii)}] for every $\ell>0$ we have \be
\lim_{x\rightarrow\infty}\lambda \left(\int_{Q(x,\ell)}V\right)=+\infty,
\ee
\item[\emph{(iii)}] there exists $\ell_1>0$ such that \bel\label{singler0}
\lim_{x\rightarrow\infty}\lambda \left(\int_{Q(x,\ell_1)}V\right)=+\infty.
\eel
\end{enumerate}
Then \emph{(i)}$\Rightarrow$\emph{(ii)}$\Rightarrow$\emph{(iii)}. If moreover $V\in A_{\infty,loc}(\R^n,H_d)$, then \emph{(iii)}$\Rightarrow$\emph{(i)}.
\end{thm}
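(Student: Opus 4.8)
The plan is to treat the three implications separately, with the bulk of the work in $\text{(iii)}\Rightarrow\text{(i)}$ under the $A_{\infty,loc}$ hypothesis. For $\text{(i)}\Rightarrow\text{(ii)}$ I would argue by contrapositive using the criterion of Proposition \ref{s-t-c-prop}: if for some $\ell$ there is a sequence of centers $x_j\to\infty$ along which $\lambda\bigl(\int_{Q(x_j,\ell)}V\bigr)$ stays bounded, say by $M$, then for each $j$ pick a unit vector $u_j\in\C^d$ with $\int_{Q(x_j,\ell)}(Vu_j,u_j)\le M$, and test \eqref{s-t-c} against $\psi_j(x)=\varphi(x-x_j)u_j$ for a fixed bump $\varphi\in C^\infty_c$ supported in $Q(0,\ell)$; this $\psi_j$ has $L^2$ norm bounded below and energy $\le \|\nabla\varphi\|_2^2+M$, so \eqref{s-t-c} fails for small $\eps$. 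The implication $\text{(ii)}\Rightarrow\text{(iii)}$ is trivial (specialize $\ell=\ell_1$); conversely one should note — as the authors surely do — that (iii) in fact already implies (ii), because one can cover a cube $Q(x,\ell)$ by finitely many translates of cubes of side $\ell_1$ and use monotonicity of $\int_Q(V\cdot,\cdot)$ together with superadditivity of $\lambda$ on $H_d^{\geq0}$ (i.e.\ $\lambda(A+B)\ge\lambda(A)$), so $\lambda\bigl(\int_{Q(x,\ell)}V\bigr)$ is bounded below by $\lambda$ of the integral over one of the small subcubes — but the stated ordering (i)$\Rightarrow$(ii)$\Rightarrow$(iii) is what must be delivered.

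The heart of the matter is $\text{(iii)}\Rightarrow\text{(i)}$, and here the strategy is a matrix-valued Fefferman–Phong / Maz'ya–Shubin-type lower bound. The key reduction, via Proposition \ref{s-t-c-prop}, is to show: for every $\eps>0$ there is $R$ so that $\int_{|x|\ge R}|\psi|^2\le\eps\,\mathcal E_V(\psi)$ for all $\psi\in C^\infty_c$. Fix a small scale $\ell$ (to be chosen) and work with the grid $\mathcal G_\ell$; summing over cubes $Q$ of the grid with $Q\cap\{|x|\ge R\}\neq\varnothing$, it suffices to prove a \emph{local} inequality of the form
\bel\label{local-FP}
\int_Q|\psi|^2\le \eps\left(\int_Q|\nabla\psi|^2+\int_Q(V\psi,\psi)\right)
\eel
for each such $Q$, where $\eps$ can be made small once $\ell$ is small and $|Q|^{-1}\lambda\bigl(\int_QV\bigr)$ is large (which, by (iii) and the remark above, happens uniformly for $R$ large). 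The proof of \eqref{local-FP} is where the $A_{\infty,loc}$ hypothesis enters: on a sizable portion of $Q$ — a set $E\subseteq Q$ with $|E|\ge c|Q|$ — we have $V(x)\ge \frac{\delta}{|Q|}\int_QV$ as quadratic forms, hence $(V(x)\psi(x),\psi(x))\ge \frac{\delta}{|Q|}\lambda\bigl(\int_QV\bigr)|\psi(x)|^2$ pointwise on $E$. So $\int_Q(V\psi,\psi)\ge \frac{\delta}{|Q|}\lambda\bigl(\int_QV\bigr)\int_E|\psi|^2$. Writing $\mu:=\frac{\delta}{|Q|}\lambda\bigl(\int_QV\bigr)$, which is large, it remains to combine $\int_E|\psi|^2$ being controlled by the potential term with a Poincaré-type inequality: a scalar function $f=|\psi|$ (or better, each component $\psi_k$) satisfies $\int_Q|f|^2\le C_n\bigl(\ell^2\int_Q|\nabla f|^2 + \frac{\ell^n}{|E|}\int_E|f|^2\bigr)$, valid whenever $|E|\ge c\,\ell^n$ — this is the standard "Poincaré with a fat set where $f$ is small in measure" inequality; applying it componentwise and summing gives $\int_Q|\psi|^2\le C_n\ell^2\int_Q|\nabla\psi|^2 + \frac{C_n}{c}\int_E|\psi|^2 \le C_n\ell^2\int_Q|\nabla\psi|^2 + \frac{C_n}{c\mu}\int_Q(V\psi,\psi)$. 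Choosing $\ell$ small and then $R$ so large that $\mu\ge\mu_0$ on all relevant cubes makes the right side $\le\eps\,\mathcal E_V(\psi)$ on $Q$; summing over the grid cubes meeting $\{|x|\ge R\}$ (with bounded overlap — here actually disjointness, as $\mathcal G_\ell$ tiles) yields \eqref{s-t-c}.

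The main obstacle, and the one point requiring genuine care, is the uniformity in $R$ of the quantity $\mu=\frac{\delta}{|Q|}\lambda\bigl(\int_QV\bigr)$ over all cubes $Q$ of the fixed small side $\ell$ that meet $\{|x|\ge R\}$: hypothesis (iii) only gives smallness "$\to\infty$" along cubes of side $\ell_1$ with center going to infinity, so one must (a) pass from side $\ell_1$ to the chosen working side $\ell$ — handled by subdividing or by the covering/superadditivity remark, which is exactly why the preliminary observations about enlarging $\ell_0$ and about \eqref{alphasmall} are recorded before the theorem — and (b) handle the transition from "center $\to\infty$" to "cube meets $\{|x|\ge R\}$", which is immediate since a cube of bounded side meeting that region has its center at distance $\ge R-\text{diam}$ from the origin. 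A secondary technical point is that the $A_{\infty,loc}$ condition \eqref{Ainfest} is stated for cubes of side $\le\ell_0$ while our working side $\ell$ may be smaller than needed or the grid cubes may be slightly larger; the remarks following Definition \ref{Ainfloc} (that $\ell_0$ may be enlarged, and that $\delta,c$ may be adjusted) dispose of this. No serious difficulty is expected in the Poincaré inequality itself, which is classical and dimension-only.
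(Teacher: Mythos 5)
Your proposal is correct and follows essentially the same route as the paper. For (i)$\Rightarrow$(ii) you argue by contrapositive while the paper argues directly, but the test functions $\varphi(\cdot-x)u$ and the resulting estimate are the same; for (iii)$\Rightarrow$(i) you use exactly the paper's combination of the $A_{\infty,loc}$ level-set bound on a large subset $E\subseteq Q$, a Poincar\'e inequality with a fat set, and the doubling estimate \eqref{alphasmall} to push divergence from scale $\ell_1$ down to a small working scale (the paper phrases this via the normalized averages $M(Q)=\ell_Q^{2-n}\int_QV$ and a dyadic subdivision, but the content is identical).
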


\begin{proof}

(i)$\Rightarrow$(ii): Fix $\ell>0$ and let $\eta\in C^\infty_c(\R^n,[0,1])$ be non trivial and identically $0$ outside $Q(0,\ell)$. If $x\in \R^n$ and $u\in \C^d$ has norm $1$, we put $\eta_{x,u}(y):=\eta\left(y-x\right)u$. Fix $\eps>0$. By discreteness of the spectrum and Proposition \ref{s-t-c-prop}, there is $R$ such that \eqref{s-t-c} holds. If $Q(x,\ell)\subseteq \{|y|\geq R\}$,\bee
\int_{\R^n}\eta^2=\int_{\R^n}|\eta_{x,u}|^2&\leq&\eps\left(\int_{\R^n}|\nabla\eta_{x,u}|^2+\int_{\R^n}(V\eta_{x,u},\eta_{x,u})\right)\\
&\leq &\eps\left(\int_{\R^n}|\nabla\eta|^2+\int_{Q(x,\ell)}(Vu,u)\right).
\eee If $\eps_0$ is such that $\eps_0\int_{\R^n}|\nabla\eta|^2\leq\frac{1}{2}\int_{\R^n}\eta^2$ and $\eps\leq \eps_0$ this implies \be
\int_{\R^n}\eta^2\leq 2\eps\int_{Q(x,\ell)}(Vu,u)=2\eps\left(\left(\int_{Q(x,\ell)}V\right)u,u\right).
\ee Taking the minimum as $u$ varies on the unit sphere of $\C^d$, we get \be\lambda\left(\int_{Q(x,\ell)}V\right)\geq(2\eps)^{-1}\int_{\R^n}\eta^2.\ee
By the arbitrariness of $\eps$, we get the thesis.

(ii)$\Rightarrow$(iii): obvious.

(iii)$\Rightarrow$(i) under the $A_{\infty,loc}$ condition: Fix $N_0\in \Z$ so that $2^{N_0}\geq \ell_1$. By our previous discussion we may assume that $V$ satisfies \eqref{Ainfest} with $\ell_0=2^{N_0}$. We associate to every cube $Q$ the non-negative matrix $M(Q):=\ell_Q^{2-n}\int_QV$. By inequality \eqref{alphasmall}, there is $D$ such that if $Q'$ is a dyadic cube, $Q$ is its father and $\ell_Q\leq 2^{N_0}$, then \bel\label{doublingM} M(Q)\leq D M(Q').\eel Given $N\in\N$, by assumption there exists $R$ such that if $Q\in \mathcal{G}_{2^{N_0}}$ intersects $\{|y|\geq R\}$, we have $M(Q)\geq D^N\mathbb{I}_d$ as quadratic forms. Fix such a cube $Q$. By \eqref{doublingM} we have\be
M(Q')\geq \mathbb{I}_d \qquad\forall Q'\in \mathcal{G}_{2^{N_0-N}}: Q'\subseteq Q.
\ee Let now $Q'$ be one of the cubes above. We have
\bel\label{Q'bound}
V(x)\geq \frac{\delta}{|Q'|}\int_{Q'}V=\delta 4^{N-N_0}M(Q')\geq \delta 4^{N-N_0} \mathbb{I}_d,
\eel on a set $E(Q')\subseteq Q'$ of measure $\geq c|Q'|$. If $\psi\in C^\infty_c(\R^n,\C^d)$, we integrate the trivial inequality \be
|\psi(x)|^2\leq 2|\psi(x)-\psi(y)|^2+2|\psi(y)|^2
\ee as $(x,y)$ varies in $Q'\times E(Q')$. We get\be
|E(Q')|\int_{Q'}|\psi|^2\leq 2\int_{Q'\times E(Q')}|\psi(x)-\psi(y)|^2dxdy+ 2|Q'|\int_{E(Q')}|\psi|^2.
\ee Using \eqref{Q'bound}, the lower bound on $|E(Q')|$ and Poincar\'e inequality \be
\int_{Q'\times Q'}|\psi(x)-\psi(y)|^2\leq C_n\ell_{Q'}^2|Q'|\int_{Q'}|\nabla\psi|^2,
\ee we find\bee
c\int_{Q'}|\psi|^2&\leq& C_n\ell_{Q'}^2\int_{Q'}|\nabla\psi|^2+2\delta^{-1}4^{N_0-N}\int_{E(Q')}(V\psi,\psi)\\
&\leq& C_n4^{N_0-N}\int_{Q'}|\nabla\psi|^2+2\delta^{-1}4^{N_0-N}\int_{Q'}(V\psi,\psi).
\eee In the second line we used the non-negativity of $V$.
Summing over $Q'$ and then over all $Q\in\mathcal{G}_{2^{N_0}}$ intersecting $\{|y|\geq R\}$, we obtain  \be
\int_{|y|\geq R}|\psi|^2\leq C_nc^{-1}\delta^{-1}4^{N_0-N}\mathcal{E}_V(\psi).
\ee By the arbitrariness of $N$, we have the thesis.\end{proof}

We now show that one cannot extend the previous characterization to the alternative $A_{\infty,loc}$ class obtained using condition \eqref{condition-b}, instead of Definition \ref{Ainfloc}. Recalling the discussion following Definition \ref{Ainfloc}, it is enough to exhibit, for every $n\geq1$ and $d\geq2$, a polynomial $W:\R^n\rightarrow H_d^{\geq0}$ such that $H_W$ has not discrete spectrum, but \bel\label{limit-W}
\lim_{Q\rightarrow \infty,\ \ell_Q=\ell}\lambda\left(\int_QW\right)=+\infty \qquad\forall \ell>0.
\eel
If $n=1$ and $d=2$, we consider $W_1(x)= \begin{bmatrix}
    x^4 & x^5 \\
    x^5 & x^6 \\
  \end{bmatrix}=x^4\begin{bmatrix}
    1 & x \\
    x & x^2 \\
  \end{bmatrix}$. By Theorem \ref{pol2}, which will be proved in Section \ref{pol-section}, $H_{W_1}$ has not discrete spectrum, because $\lambda=\lambda(W_1)\equiv0$, and $H_\lambda=-\Delta$ has not discrete spectrum. For every $x\in\R$ and $\ell>0$, we have\bee
 \det\left(\int_{x-\ell/2}^{x+\ell/2}W_1(y)dy\right) &=&
  \det
  \begin{bmatrix}
   \frac{(x+\ell/2)^5-(x-\ell/2)^5}{5} & \frac{(x+\ell/2)^6-(x-\ell/2)^6}{6} \\
    \frac{(x+\ell/2)^6-(x-\ell/2)^6}{6}  &  \frac{(x+\ell/2)^7-(x-\ell/2)^7}{7} \\
  \end{bmatrix}=\frac{\ell^4x^8}{12}\\&+&\text{terms of lower degree in $x$}, \\
 \text{tr}\left(\int_{x-\ell/2}^{x+\ell/2}W_1(y)dy\right) &=& \ell x^6+\text{terms of lower degree in $x$}.
  \eee
Since $\lambda(A)\geq\frac{\det(A)}{\text{tr}(A)}$ for every non-negative $2\times 2$ matrix $A$, we conclude that 
\bel\label{diverge}\liminf_{x\rightarrow\pm\infty} \frac{\lambda\left(\int_{x-\ell/2}^{x+\ell/2}W_1(y)dy\right)}{x^2}\geq\frac{\ell^3}{12},\eel which is stronger than \eqref{limit-W}. 

If $n\geq 1$ and $d=2$, we consider instead the potential $W_n(x)=\sum_{j=1}^nW_1(x_j)$. Notice that $H_{W_n}$ is the sum of the $n$ pairwise commuting operators obtained by letting $H_{W_1}$ act on each variable separately. By basic spectral theory, $H_{W_n}$ has not discrete spectrum. To see that \eqref{limit-W} is satisfied by $W_n$, we use the concavity of $\lambda$: \be
\lambda(A+B)\geq\lambda(A)+\lambda(B)\qquad\forall A,B\geq0,\ee which follows from the fact that $\lambda$ is the infimum of the family of linear functionals $\{A\mapsto(Au,u)\}_{|u|=1}$. 
For every $x\in\R^n$ and $\ell>0$, we have\bee
 \lambda\left(\int_{Q(x,\ell)}W_n(y)dy\right) &=&  \lambda\left(\ell^{n-1}\sum_{j=1}^n\int_{x_j-\ell/2}^{x_j+\ell/2}W_1(y)dy\right) \\
 &\geq & \sum_{j=1}^n\ell^{n-1}\lambda\left(\int_{x_j-\ell/2}^{x_j+\ell/2}W_1(y)dy\right).
 \eee  By \eqref{diverge}, we conclude that
\be\liminf_{x\rightarrow\infty} \frac{\lambda\left(\int_{Q(x,\ell)}W_n(y)dy\right)}{|x|^2}\geq c_n\ell^{n+2}.\ee

For the remaining case $n\geq 1$, $d\geq 3$, we put $W_{n,d}(x)=\begin{bmatrix}
  W_n(x) & \mathbb{O}_{2,d-2} \\
   \mathbb{O}_{d-2,2}  &  |x|^2\mathbb{I}_{d-2} \\
  \end{bmatrix}$, where $\mathbb{O}$ and $\mathbb{I}$ are the zero and identity matrices of the dimensions indicated by the subscripts. We omit the elementary verification that $H_{W_{n,d}}$ has not discrete spectrum, and that $W_{n,d}$ satisfies \eqref{limit-W}.
 
\subsection{Relation with the Treil-Volberg matrix-valued $A_2$ class}\label{treil-volberg-subsection}

In \cite{treil-volberg}, Treil and Volberg introduced a Muckenhoupt $A_2$ class of non-negative matrix-valued functions. In this subsection we prove that the local version of this class is contained in $A_{\infty,loc}(\R^n,H_d)$. 

\begin{dfn}[cf. \cite{treil-volberg} ]\label{treil-volberg-def}
A measurable function $W:\R^n\rightarrow H_d^{\geq0}$ is in $A_{2,loc}(\R^n,H_d)$ if it is almost everywhere invertible, both $W$ and $W^{-1}$ are locally integrable and there exists $\ell_0>0$ such that\be
[W]_{A_{2,\text{loc}}}:=\sup_Q \left|\left| \left(\frac{1}{|Q|}\int_Q W\right)^{\frac{1}{2}}\left(\frac{1}{|Q|}\int_Q W^{-1}\right)^\frac{1}{2} \right|\right|_{op}<+\infty,
\ee where $Q$ varies over all cubes of side $\ell_Q\leq \ell_0$.
\end{dfn}

\begin{prop}
$A_{2,loc}(\R^n,H_d)\subseteq A_{\infty,loc}(\R^n,H_d)$.
\end{prop}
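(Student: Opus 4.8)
The plan is to verify Definition \ref{Ainfloc} directly, exploiting the scalar $A_{2,loc}$ machinery one quadratic form at a time. Fix $W\in A_{2,loc}(\R^n,H_d)$ with constant $B:=[W]_{A_{2,loc}}$ and threshold $\ell_0$, and fix a cube $Q$ with $\ell_Q\le\ell_0$. Write $\langle W\rangle_Q:=\frac1{|Q|}\int_Q W$ and $\langle W^{-1}\rangle_Q:=\frac1{|Q|}\int_Q W^{-1}$; the defining inequality says $\|\langle W\rangle_Q^{1/2}\langle W^{-1}\rangle_Q^{1/2}\|_{op}\le B$, equivalently $\langle W^{-1}\rangle_Q\le B^2\,\langle W\rangle_Q^{-1}$ as quadratic forms. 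The matrix we must compare $W(x)$ against is $A:=\langle W\rangle_Q$; I want to show $W(x)\ge \delta A$ on a set of proportional measure, for suitable universal $\delta,c$.

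The key reduction is to test against the eigenvector $u$ of $A$ realizing its \emph{largest} eigenvalue — this is where the $A_2$ control on $W^{-1}$ bites. Indeed, for the unit vector $u$ one has $(Au,u)=\|A\|_{op}$, while the $A_2$ bound gives $\frac1{|Q|}\int_Q (W^{-1}(x)u,u)\,dx=(\langle W^{-1}\rangle_Q u,u)\le B^2\,(A^{-1}u,u)=B^2/\|A\|_{op}$. So the scalar function $g(x):=(W^{-1}(x)u,u)$ has average over $Q$ at most $B^2(Au,u)^{-1}$. By Chebyshev, the set where $g(x)\le 2B^2(Au,u)^{-1}$ has measure at least $|Q|/2$. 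On that set, for \emph{any} vector $v$, Cauchy--Schwarz in the inner product induced by $W(x)$ (note $W(x)$ is a.e.\ positive definite) gives $|(v,u)|^2=|(W(x)^{1/2}v,W(x)^{-1/2}u)|^2\le (W(x)v,v)\,(W(x)^{-1}u,u)=(W(x)v,v)\,g(x)$, hence
\[
(W(x)v,v)\ \ge\ \frac{|(v,u)|^2}{g(x)}\ \ge\ \frac{(Au,u)}{2B^2}\,|(v,u)|^2 .
\]
This controls $W(x)$ from below only in the single direction $u$, though.

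To upgrade this to the full form inequality $W(x)\ge\delta A$, I would combine the above with a lower bound for $W(x)$ in the directions orthogonal to $u$. The natural tool is to iterate the argument on the restriction of $W$ to the orthogonal complement, or — cleaner — to use that $A_{2,loc}$ is preserved under the standard dimensional induction (compression to a coordinate subspace), together with the observation that $\|A\|_{op}$ and the minimal eigenvalue $\lambda(A)$ are comparable up to a factor depending only on how far $W$ is from scalar on $Q$, which $[W]_{A_{2,loc}}$ itself controls: from $\langle W^{-1}\rangle_Q\le B^2 A^{-1}$ and $\langle W^{-1}\rangle_Q\ge \langle W\rangle_Q^{-1}=A^{-1}$ (by operator convexity of $X\mapsto X^{-1}$, i.e.\ Jensen's inequality for the matrix average — this is the matrix Kantorovich/Jensen step) one gets $A^{-1}\le\langle W^{-1}\rangle_Q\le B^2A^{-1}$, which pins down the whole spectrum of $\langle W^{-1}\rangle_Q$ relative to $A$. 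Running the Chebyshev argument against each of the $d$ eigenvectors of $A$ simultaneously, and intersecting the $d$ resulting good sets (each of measure $\ge(1-1/(2d))|Q|$, after replacing the threshold $2B^2$ by $2dB^2$), one lands on a set of measure $\ge|Q|/2$ on which $(W(x)u_j,u_j)\ge c(d,B)\,(Au_j,u_j)$ for every eigenvector $u_j$ of $A$; expanding an arbitrary $v$ in this eigenbasis and using non-negativity of $W(x)$ to discard cross terms after a further Cauchy--Schwarz (or simply using that the $u_j$ diagonalize $A$) yields $W(x)\ge\delta(d,B)\,A$ there. This is Definition \ref{Ainfloc} with $c=1/2$ and $\delta=\delta(d,B)$, so $W\in A_{\infty,loc}(\R^n,H_d)$.

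The main obstacle is precisely the last upgrade: the one-direction Cauchy--Schwarz bound is cheap, but passing from "controlled in each eigendirection of $A$ on a large set" to "$W(x)\ge\delta A$ as a form" requires care because $W(x)$ need not be diagonal in the eigenbasis of $A$ — one cannot just add up the diagonal estimates. The resolution is to note that $W(x)\ge\delta A$ is equivalent to $A^{-1/2}W(x)A^{-1/2}\ge\delta\,\mathbb{I}_d$, i.e.\ to a lower bound on the minimal eigenvalue of the conjugated matrix, and a lower bound on the minimal eigenvalue follows from lower bounds on all \emph{diagonal} entries of the conjugated matrix only if one also controls the off-diagonal entries; so in practice I expect to run the Chebyshev step not against the eigenvectors of $A$ but against a $1/\sqrt d$-net of unit vectors, or equivalently to test $(W(x)^{-1}w,w)$ for $w$ ranging over such a net and invoke $\inf_{|w|=1}(W(x)v,v)/|(v,w)|^2$-type duality — a standard but slightly fiddly discretization of the operator norm. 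None of this is deep; it is the routine part alluded to by the paper's style, and the genuinely new input is the two-sided squeeze $A^{-1}\le\langle W^{-1}\rangle_Q\le B^2A^{-1}$ coming from Definition \ref{treil-volberg-def} combined with operator-convexity of inversion.
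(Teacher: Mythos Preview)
Your route is viable and genuinely different from the paper's, but the ending is not finished as written, and one side remark is wrong.

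The wrong remark: the $A_{2,loc}$ constant does \emph{not} control the condition number of $A=\langle W\rangle_Q$. Take $W\equiv\mathrm{diag}(1,M)$; then $[W]_{A_{2,loc}}=1$ while $\|A\|_{op}/\lambda(A)=M$ is arbitrary. Fortunately you never actually use this claim, and the operator-Jensen squeeze $A^{-1}\le\langle W^{-1}\rangle_Q\le B^2A^{-1}$, though correct, is also irrelevant here.

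The gap you worry about closes with what you already have; no net is needed. Your Cauchy--Schwarz step gives, for \emph{every} $v$ and every eigenvector $u_j$ of $A$ with eigenvalue $\lambda_j$, the bound $(W(x)v,v)\,(W^{-1}(x)u_j,u_j)\ge |(v,u_j)|^2$. On the intersection of the $d$ Chebyshev good sets (threshold $2dB^2/\lambda_j$ for the $j$th, each of measure $\ge(1-\tfrac1{2d})|Q|$, so the intersection has measure $\ge|Q|/2$) you obtain $(W(x)v,v)\ge \tfrac{\lambda_j}{2dB^2}|(v,u_j)|^2$ for every $j$. Since $(Av,v)=\sum_j\lambda_j|(v,u_j)|^2\le d\max_j\lambda_j|(v,u_j)|^2$, this already gives $(W(x)v,v)\ge\tfrac{1}{2d^2B^2}(Av,v)$ for all $v$, i.e.\ $W(x)\ge\tfrac{1}{2d^2B^2}A$. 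Equivalently: you are bounding the diagonal entries, in the eigenbasis of $A$, of the \emph{positive} matrix $A^{1/2}W(x)^{-1}A^{1/2}$; for a positive matrix the trace dominates the operator norm, and it is the operator norm of this matrix that governs $W(x)\ge\delta A$. Your concern was about lower-bounding $\lambda_{\min}$ of $A^{-1/2}W(x)A^{-1/2}$ from its diagonal, which indeed fails; passing to the inverse fixes it.

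For comparison, the paper's argument is shorter and bypasses the direction-by-direction bookkeeping. It uses the equivalence $W(x)\ge\delta A\iff\|(\delta A)^{1/2}W(x)^{-1/2}\|_{op}\le1$, applies Chebyshev directly to the scalar function $x\mapsto\|A^{1/2}W(x)^{-1}A^{1/2}\|_{op}$, and then pulls the integral inside the norm via the Treil--Volberg inequality $\int_Q\|U\|_{op}\le d\,\bigl\|\int_Q U\bigr\|_{op}$ for $U\ge0$. This yields $|\{x\in Q: W(x)\ngeq\delta A\}|\le\delta d\,[W]_{A_{2,loc}}^2|Q|$ in one stroke. Your approach trades that matrix-integral lemma for the elementary trace bound, at the cost of a slightly worse constant; both are legitimate.
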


\begin{proof}
We estimate the measure of the complement of the set appearing in \eqref{Ainfest}. If $A$ and $B$ are two non-negative matrices, $A\ngeq B$ is not equivalent to $A<B$ (except in the case $d=1$), but we can use the fact that when $A$ is invertible, $A\geq B$ if and only if $||B^{\frac{1}{2}}A^{-\frac{1}{2}}||_{op}\leq 1$ (Lemma $V.1.7$ of \cite{bhatia}). Since $W(x)>0$ almost everywhere, this allows to write (for $Q$ any cube of side $\ell_Q\leq \ell_0$)\bee
&&\left|\left\{x\in Q: W(x)\ngeq \frac{\delta}{|Q|}\int_QW\right\}\right|=\left|\left\{x\in Q: \left|\left|\left(\frac{\delta}{|Q|}\int_QW\right)^{\frac{1}{2}}W(x)^{-\frac{1}{2}}\right|\right|_{op}>1\right\}\right|\\
&\leq& \int_Q\left|\left|\left(\frac{\delta}{|Q|}\int_QW\right)^{\frac{1}{2}}W(x)^{-\frac{1}{2}}\right|\right|_{op}^2dx\\
&=& \delta\int_Q\left|\left|\left(\frac{1}{|Q|}\int_QW\right)^{\frac{1}{2}}W(x)^{-1}\left(\frac{1}{|Q|}\int_QW\right)^{\frac{1}{2}}\right|\right|_{op}dx,
\eee where the last equality follows from the identity $||A^*A||_{op}=||A||^2$ which holds for any matrix $A$. Since $\int_Q||U||_{op}\leq d||\int_QU||_{op}$ for any integrable $U:Q\rightarrow H_d^{\geq0}$ (Lemma $3.1$ of \cite{treil-volberg}), we find\bee
&&\delta\int_Q\left|\left|\left(\frac{1}{|Q|}\int_QW\right)^{\frac{1}{2}}W(x)^{-1}\left(\frac{1}{|Q|}\int_QW\right)^{\frac{1}{2}}\right|\right|_{op}dx\\
&\leq& \delta d|Q|\left|\left|\left(\frac{1}{|Q|}\int_QW\right)^{\frac{1}{2}}\frac{1}{|Q|}\int_QW(x)^{-1}dx\left(\frac{1}{|Q|}\int_QW\right)^{\frac{1}{2}}\right|\right|_{op}\\
&=&\delta d|Q|\left|\left|\left(\frac{1}{|Q|}\int_QW\right)^{\frac{1}{2}}\left(\frac{1}{|Q|}\int_QW(x)^{-1}dx\right)^{\frac{1}{2}}\right|\right|_{op}^2\leq \delta d[W]_{A_{2,loc}}^2|Q|.
\eee
Putting everything together, we conclude that \be
\left|\left\{x\in Q: W(x)\geq \frac{\delta}{|Q|}\int_QW\right\}\right|> (1-\delta d[W]_{A_{2,loc}}^2)|Q| \qquad\forall Q:\ell_Q\leq \ell_0.
\ee When $\delta$ is small enough, this is inequality \eqref{Ainfest}.
\end{proof}

It is worth noticing that matrix-valued $A_p$ classes for $p\notin\{2,\infty\}$ were introduced in \cite{volberg}. The definition is rather different from the one of $A_2$ and it may be interesting to know how they are related to $A_{\infty,loc}(\R^n,H_d)$.

\section{A sufficient condition for discreteness of the spectrum\\ and degenerate potentials}\label{deg-section} 

\subsection{A notion of oscillation for subspace-valued mappings}\label{deg-subsection} Recall that we denote by $\mathcal{V}(d)$ the set of non trivial linear subspaces of $\C^d$. Let \be\mathcal{S}:\R^n\longrightarrow \mathcal{V}(d)\ee be a fixed measurable mapping. We recall that by measurable we mean that there exist $k$ measurable vector fields $v_1,\dots,v_k:\R^n\rightarrow \C^d$ such that $\mathcal{S}(x)$ is the span of $\{v_1(x),\dots,v_k(x)\}$.
If $Q\subseteq\R^n$ is a cube, the set of unit sections of $\mathcal{S}$ on $Q$ is\be
\Gamma(Q,\mathcal{S}):=\{v:Q\rightarrow \C^d \text{ meas.}\colon v(x)\in\mathcal{S}(x)\text{ and } |v(x)|=1\quad \text{a.e. } x\in Q\}.
\ee Non-triviality and measurability in $x$ of $\mathcal{S}(x)$ guarantee that there are plenty of unit sections. We are ready to give our key definition.

\begin{dfn}\label{def-omega} The oscillation of $\mathcal{S}$ on a cube $Q$ is the following quantity:
\bel
\omega(Q,\mathcal{S}):=\inf_{v\in\Gamma(Q,\mathcal{S})}\sqrt{\frac{1}{|Q|}\int_Q\left|v(y)-\frac{1}{|Q|}\int_Qv\right|^2dy}.
\eel 
\end{dfn}

Notice that \be
\sqrt{\frac{1}{|Q|}\int_Q\left|v(y)-\frac{1}{|Q|}\int_Qv\right|^2dy}=\inf_{b\in\C^d}\sqrt{\frac{1}{|Q|}\int_Q\left|v(y)-b\right|^2dy}
\ee and hence $\omega(Q,\mathcal{S})$ is the distance in $L^2(Q,\C^d)$ between the set $\Gamma(Q,\mathcal{S})$ and the set of constant vector fields on $Q$. By using the fact that $|v|\equiv1$, it is also easy to verify that 
\bel\label{id-osc}
\omega(Q,\mathcal{S})^2= 1-\sup_{v\in\Gamma(Q,\mathcal{S})}\left|\frac{1}{|Q|}\int_Qv\right|^2.
\eel It is then obvious that $\omega(Q,\mathcal{S})\in[0,1]$. The following lemma gives us an idea of which feature of $\mathcal{S}$ on $Q$ is measured by $\omega(Q,\mathcal{S})$.

\begin{lem}\label{omega-0} $\omega(Q,\mathcal{S})=0$ if and only if there is a subset $F\subseteq Q$ such that $|Q\setminus F|=0$ and\bel\label{intersection}
\cap_{x\in F}\mathcal{S}(x)\neq \{0\},
\eel i.e., there exists $u\in\C^d\setminus \{0\}$ such that $u\in \mathcal{S}(x)$ for almost every $x\in Q$.

\end{lem}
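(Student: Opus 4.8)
The plan is to prove the two implications separately, the backward direction being essentially trivial and the forward direction being the substantive one. For the backward implication, suppose there is $u\in\C^d\setminus\{0\}$ with $u\in\mathcal{S}(x)$ for a.e. $x\in Q$; normalizing, we may take $|u|=1$. Then the constant section $v(x)\equiv u$ belongs to $\Gamma(Q,\mathcal{S})$, and plugging it into Definition \ref{def-omega} gives $\omega(Q,\mathcal{S})\le 0$, hence $\omega(Q,\mathcal{S})=0$.

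For the forward implication I would use the reformulation \eqref{id-osc}: $\omega(Q,\mathcal{S})=0$ means $\sup_{v\in\Gamma(Q,\mathcal{S})}\left|\frac{1}{|Q|}\int_Q v\right|=1$. The strategy is first to promote this supremum to a maximum, i.e., to produce a section $v_*\in\Gamma(Q,\mathcal{S})$ with $\left|\frac{1}{|Q|}\int_Q v_*\right|=1$, and then to argue that such a $v_*$ must be a.e. equal to a constant unit vector $u$, which will automatically lie in $\mathcal{S}(x)$ a.e. The second step is easy once the first is done: if $b:=\frac{1}{|Q|}\int_Q v_*$ has $|b|=1$, then by the equality case of the triangle/Jensen inequality $\frac{1}{|Q|}\int_Q |v_*-b|^2 = \frac{1}{|Q|}\int_Q|v_*|^2 - |b|^2 = 1-1 = 0$, so $v_*(x)=b$ a.e., and since $v_*(x)\in\mathcal{S}(x)$ a.e., taking $u=b$ and $F=\{x\in Q:\ v_*(x)=b\in\mathcal{S}(x)\}$ gives \eqref{intersection}.

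The main obstacle is therefore the compactness step: extracting, from a maximizing sequence $v_j\in\Gamma(Q,\mathcal{S})$ with $\left|\frac{1}{|Q|}\int_Q v_j\right|\to 1$, a limiting section in $\Gamma(Q,\mathcal{S})$. The set $\Gamma(Q,\mathcal{S})$ is bounded in $L^2(Q,\C^d)$ (indeed in $L^\infty$), so after passing to a subsequence $v_j\rightharpoonup v_*$ weakly in $L^2$; weak convergence passes to the averages, so $\left|\frac{1}{|Q|}\int_Q v_*\right|=1$, and also $\frac{1}{|Q|}\int_Q|v_*|^2\le 1$ by weak lower semicontinuity of the norm. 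Combined with $|\frac1{|Q|}\int_Q v_*|\le (\frac1{|Q|}\int_Q|v_*|^2)^{1/2}$, this forces $\frac1{|Q|}\int_Q|v_*|^2=1$ and $v_j\to v_*$ strongly in $L^2$; hence along a further subsequence $v_j(x)\to v_*(x)$ a.e., which gives both $|v_*(x)|=1$ a.e. and $v_*(x)\in\mathcal{S}(x)$ a.e. The only point requiring a word of care is this last membership: $\mathcal{S}(x)$ is a closed subspace for each fixed $x$, so pointwise limits of vectors in $\mathcal{S}(x)$ stay in $\mathcal{S}(x)$, and measurability of $v_*$ is automatic as an a.e. limit of measurable maps. (Alternatively, one can bypass weak compactness by writing $\mathcal{S}$ via its measurable spanning frame $v_1,\dots,v_k$, letting $P(x)$ be the orthogonal projection onto $\mathcal{S}(x)$ — measurable in $x$ — and noting $v\in\Gamma(Q,\mathcal{S})$ iff $P(x)v(x)=v(x)$ and $|v(x)|=1$ a.e.; the argument above then takes place inside the weakly closed, bounded set $\{v\in L^2(Q,\C^d):\ Pv=v\}$.) Once $v_*$ is in hand, the routine computation of the previous paragraph closes the proof.
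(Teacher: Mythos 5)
Your proof is correct, and the forward direction takes a genuinely different route from the paper. You extract a weak-$L^2$ limit $v_*$ of a maximizing sequence, upgrade to strong $L^2$ convergence via the Radon--Riesz property once you've shown $\|v_j\|_{L^2}\to\|v_*\|_{L^2}$, pass to an a.e.\ convergent subsequence, and then use the equality case of Jensen to see that $v_*$ must equal a constant unit vector a.e. The paper instead works directly with the averages $\frac{1}{|Q|}\int_Q v_k$: after extracting a subsequence so that $\frac{1}{|Q|}\int_Q\Re(v_k,u_0)\geq 1-1/k^2$, it defines the exceptional sets $A_{k,m}=\{\Re(v_k,u_0)>1-1/m\}$, estimates $|Q\setminus A_{k,m}|\leq (m/k^2)|Q|$ by Chebyshev, and applies Borel--Cantelli to conclude $\Re(v_k(x),u_0)\to 1$ a.e.; strict convexity of the sphere then gives $v_k(x)\to u_0$ a.e. Both arguments isolate a constant unit vector $u_0$ lying in $\mathcal{S}(x)$ for a.e.\ $x$, and both ultimately rest on producing an a.e.\ pointwise convergent subsequence. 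Your route is more conceptual (weak compactness of the unit ball, $\Gamma(Q,\mathcal{S})$ being essentially a closed bounded subset of a Hilbert space) and makes the extremal structure transparent; the paper's route is more elementary and self-contained, needing nothing beyond Chebyshev, Borel--Cantelli, and Cauchy--Schwarz. One stylistic remark: the identity $\frac{1}{|Q|}\int_Q|v_*-b|^2=\frac{1}{|Q|}\int_Q|v_*|^2-|b|^2$ is a direct expansion (it is exactly \eqref{id-osc} applied to $v_*$), so calling it ``the equality case of Jensen'' is slightly misleading, though the computation itself is of course right.
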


\begin{proof}
The non-trivial direction is the \emph{only if}. If $\omega(Q,\mathcal{S})=0$, by \eqref{id-osc} there is a sequence $\{v_k\}_{k\in\N}\subseteq \Gamma(Q,\mathcal{S})$ such that $\lim_{k\rightarrow+\infty}\left|\frac{1}{|Q|}\int_Qv_k\right|=1$. Since $\left\{\frac{1}{|Q|}\int_Qv_k\right\}_{k\in\N}$ is contained in the unit ball of $\C^d$, passing to a subsequence we can suppose that there is $u_0$ in the unit sphere of $\C^d$ to which it converges as fast as we like. In particular, we can assume that \bel\label{k^2}
\frac{1}{|Q|}\int_Q\Re (v_k,u_0)\geq 1-\frac{1}{k^2}\qquad\forall k\in\N,
\eel where $\Re(v_k,u_0)$ denotes the real part of the scalar product, which is pointwise $\leq 1$ by the Cauchy-Schwarz inequality. Consider the sets \be A_{k,m}:=\{x\in Q: \Re(v_k(x),u_0)> 1-1/m\} \qquad (k,m\in\N).\ee It is easy to see that \eqref{k^2} implies $|Q\setminus A_{k,m}|\leq \frac{m}{k^2}|Q|$. Since $\sum_{k=1}^{+\infty}|Q\setminus A_{k,m}|<+\infty$ for every $m$, the Borel-Cantelli Lemma gives\be
|\cup_{\ell\in \N}\cap_{k\geq \ell}A_{k,m}|=|Q|\qquad\forall m,
\ee and hence \be
|\cap_{m\in\N}\cup_{\ell\in \N}\cap_{k\geq \ell}A_{k,m}|=|Q|.
\ee Unravelling the notation, this means that there is a set of full measure $F$ such that $\Re(v_k(x),u_0)$ converges to $1$ at every $x\in F$. Since $|u_0|=|v_k|\equiv1$, the strict convexity of the sphere implies that $v_k(x)$ converges to $u_0$ at every $x\in F$. Since we may assume that $v_k(x)\in\mathcal{S}(x)$ for every $k$ and every $x\in F$, we conclude that $u_0\in\cap_{x\in F}\mathcal{S}(x)$, as we wanted.
\end{proof}

The following lemma states that if $\cap_{x\in Q}\mathcal{S}(x)=\{0\}$ in a certain quantitative sense, then $\omega(Q,\mathcal{S})$ has an explicit lower bound.

\begin{lem}
\emph{(i)} Let $\mathcal{S}:\R^n\rightarrow\mathcal{V}(d)$ be measurable and let $Q$ be a cube. Assume that for some $N\in\N$ we have the disjoint union \be Q=\cup_{j=1}^NA_j,\ee and $\eta>0$ is such that \be|A_j|\geq\eta|Q| \qquad\forall j.\ee 
Assume moreover that there is a $\delta>0$ such that the following property holds for each $j$: for every $x\in A_j$ and $v\in \mathcal{S}(x)$, there exists $k\neq j$ such that\bel\label{angle}
|(v,w)|\leq(1-\delta)|v||w| \qquad\forall w\in\mathcal{S}(y),\ \forall y\in A_k.
\eel Then \bel\label{lower-bound-omega}
\omega(Q,\mathcal{S})\geq \sqrt{\delta\eta}.
\eel

\emph{(ii)} If $\mathcal{S}$ is constant on each $A_j$, i.e., $\mathcal{S}\equiv\mathcal{S}_j$ on $A_j$, then the property above is satisfied for some $\delta>0$ if and only if $\cap_{j=1}^N\mathcal{S}_j=\{0\}$. The value of $\delta$ may be chosen to depend only on the subspaces $\{\mathcal{S}_j\}_{j=1}^N$.
\end{lem}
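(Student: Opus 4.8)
The plan is to handle the two parts separately, with part (i) being the analytical core and part (ii) a compactness/linear-algebra postscript. For part (i), fix an arbitrary unit section $v\in\Gamma(Q,\mathcal{S})$ and, recalling \eqref{id-osc}, it suffices to show that $\left|\frac{1}{|Q|}\int_Q v\right|^2\leq 1-\delta\eta$. Write $b:=\frac{1}{|Q|}\int_Q v\in\C^d$ and estimate $|b|^2=\Re(b,b)=\frac{1}{|Q|}\int_Q\Re(v(x),b)\,dx$. The key point is that the hypothesis \eqref{angle} forces, on a large portion of $Q$, the pointwise quantity $\Re(v(x),b)$ to be strictly less than $1$ by a definite amount. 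Indeed, I would argue as follows: since $|b|\le 1$, if $|b|^2>1-\delta\eta$ then $|b|$ is close to $1$, and $b/|b|$ is "almost" a common unit vector; I will show this is incompatible with \eqref{angle}. More precisely, pick the index $j$ for which $\frac{1}{|A_j|}\int_{A_j}\Re(v(x),b)\,dx$ is largest; averaging shows this average is $\ge|b|^2$. For $x\in A_j$, apply \eqref{angle} to $v=v(x)$: there is $k\neq j$ with $|(v(x),w)|\le(1-\delta)$ for all unit $w\in\mathcal{S}(y)$, $y\in A_k$; integrating over $y\in A_k$ gives $|(v(x), \overline{v}_k)|\le (1-\delta)$ where $\overline{v}_k:=\frac{1}{|A_k|}\int_{A_k}v$. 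The cleanest route, though, is probably the dual one: show directly that for \emph{every} $b$ with $|b|=1$ the average $\frac{1}{|Q|}\int_Q|(v(x),b)|^2\,dx\le 1-\delta\eta$, since $\int_Q|(v,b)|^2 = |Q| - \int_Q |v - (v,b)b|^2 \le |Q| - \int_Q(\text{distance of }v(x)\text{ to }\C b)^2$, and then use \eqref{angle} to produce, for each $b$, at least one block $A_j$ on which $v(x)$ stays a definite distance $\sqrt{\delta}$ (roughly) away from the line $\C b$. I would pick $j$ to be an index such that no $x\in A_j$ has $\mathcal{S}(x)\ni$ a vector within angle-defect $\delta$ of $b$ simultaneously for that same $b$ being witnessed by the partner block — unravelling \eqref{angle} to show such a block exists for every $b$ is exactly where care is needed. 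Summing the block estimates weighted by $|A_j|\ge\eta|Q|$ yields \eqref{lower-bound-omega}.

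The main obstacle I anticipate is the bookkeeping in the last step: \eqref{angle} only guarantees, for each point $x$, the \emph{existence} of some partner block $k=k(x)$, and this block depends on $x$ (and on $v$). To turn a pointwise "$v(x)$ is not almost-aligned with something in block $k(x)$" into a global lower bound on $\omega$, one has to fix the competitor constant $b$ first and then find a single block $A_j$ on which $v$ avoids the line $\C b$; the cleanest way is a pigeonhole: if $b$ were within defect $\delta/2$ of a unit vector in $\mathcal{S}(x_0)$ for some $x_0\in A_{j_0}$, then by \eqref{angle} there is a block $A_k$ that is "far from $b$", and we use \emph{that} block in the integral. I expect the constants to be slightly worse than the clean $\sqrt{\delta\eta}$ unless one is careful, so I would set up \eqref{angle} to be used symmetrically and perhaps prove first the auxiliary fact that at most the points of a single block $A_j$ can carry unit vectors within defect $\delta$ of a fixed $b$.

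For part (ii), when $\mathcal{S}\equiv\mathcal{S}_j$ on $A_j$, condition \eqref{angle} reads: for every $j$ and every unit $v\in\mathcal{S}_j$ there is $k\neq j$ with $|(v,w)|\le(1-\delta)$ for all unit $w\in\mathcal{S}_k$, i.e., $v$ makes angle bounded away from $0$ with the whole subspace $\mathcal{S}_k$. If $\cap_j\mathcal{S}_j\neq\{0\}$, a common unit vector $u$ violates this (taking $v=w=u$ gives $|(v,w)|=1$), so no $\delta>0$ works. Conversely, if $\cap_j\mathcal{S}_j=\{0\}$, I would argue by compactness: the function $f(v):=\min_{j}\,\max_{k\neq j}\,\big(1-\sup_{w\in\mathcal{S}_k,\,|w|=1}|(v,w)|\big)$, as $v$ ranges over the union of the unit spheres of the $\mathcal{S}_j$, is continuous and strictly positive — strict positivity because $f(v)=0$ would force $v$ to lie in every $\mathcal{S}_k$ (for the maximizing $j$, all $k\neq j$ contain a unit vector perfectly aligned with $v$, i.e. $v\in\mathcal{S}_k$ for all $k\neq j$, and $v\in\mathcal{S}_j$ by assumption), contradicting $\cap_j\mathcal{S}_j=\{0\}$ — so on the compact unit spheres it attains a positive minimum, which we take as $\delta$. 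This $\delta$ depends only on $\{\mathcal{S}_j\}_{j=1}^N$, as claimed. I expect part (ii) to be routine once the quantifiers in \eqref{angle} are read correctly; the only subtlety is checking that "$\sup_{w}|(v,w)|=1$ over the unit sphere of a finite-dimensional $\mathcal{S}_k$" is attained and equals $1$ exactly when $v\in\mathcal{S}_k$.
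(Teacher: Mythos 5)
For part (i), the paper sidesteps entirely the bookkeeping obstacle you correctly identified (that the partner block $k=k(x)$ depends on the point), and it does so by \emph{not} fixing a constant competitor $b$. It expands the square of the average directly:
\[
\Big|\int_Q u\Big|^2=\sum_{j=1}^N\Big|\int_{A_j}u\Big|^2+\sum_{j=1}^N\int_{A_j}\int_{A_j^c}(u(x),u(y))\,dy\,dx,
\]
bounds the diagonal by $\sum_j|A_j|^2$, and then, for each \emph{fixed} $x\in A_j$, splits the inner integral over $A_j^c$ into $A_{k(x)}$ and the rest: on $A_{k(x)}$ the hypothesis \eqref{angle} gives $|(u(x),u(y))|\le 1-\delta$, so the gain over the trivial bound is $\delta|A_{k(x)}|\ge\delta\eta|Q|$ for that $x$. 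Since this lower bound on the gain is \emph{uniform} in $k(x)$, the $x$-dependence of the partner block causes no loss at all, and integrating over $x$ and summing over $j$ yields exactly $|\int_Q u|^2\le(1-\delta\eta)|Q|^2$, i.e.\ \eqref{lower-bound-omega}. Your first sketch (averaging $\Re(v(x),b)$ block by block and integrating \eqref{angle} over $y\in A_{k(x)}$) was heading precisely here, but you abandoned it for the pigeonhole route.

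The pigeonhole route has a genuine gap: it does not prove \eqref{lower-bound-omega} with the stated constant. If $b$ is within defect $\delta'$ of some unit $v_0\in\mathcal{S}(x_0)$, i.e.\ $|(v_0,b)|\ge 1-\delta'$, then the best phase rotation gives $|b-e^{i\theta}v_0|\le\sqrt{2\delta'}$, so on the partner block you only get $|(b,w)|\le(1-\delta)+\sqrt{2\delta'}$, and to make the resulting margin at least $\delta$ you are forced to take $\delta'\lesssim\delta^2/8$. But in the complementary case (where $b$ is $\delta'$-far from every $\mathcal{S}(x)$) your margin is only $\delta'\sim\delta^2$, giving $\omega\gtrsim\delta$, which is strictly weaker than $\sqrt{\delta\eta}$ whenever $\eta$ is not $\lesssim\delta$. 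This is a real loss, not a cosmetic one; the double-integral expansion above is the mechanism that removes it.

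Part (ii): your compactness argument via the function $f$ on the union of unit spheres is correct and essentially the same as the paper's, which takes a failing sequence $\delta_n\downarrow 0$, extracts convergent witnesses $(j_0,v,\{w_k\})$, and uses the equality case of Cauchy--Schwarz to conclude $v\in\cap_j\mathcal{S}_j$. Both give a $\delta$ depending only on $\{\mathcal{S}_j\}_{j=1}^N$, as required.
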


\begin{proof}
(i): Let $u\in\Gamma(Q,\mathcal{S})$. Then \bel\label{angle-est}
\left|\int_Qu\right|^2=\left|\sum_{j=1}^N\int_{A_j}u\right|^2=\sum_{j=1}^N\left|\int_{A_j}u\right|^2+\sum_{j=1}^N\int_{A_j}\int_{A_j^c}(u(x),u(y))dydx.
\eel 
Fix $x\in A_j$ such that $u(x)\in\mathcal{S}(x)$ (almost every $x$ has this property). By the hypothesis, there exists $k$ such that \eqref{angle} holds. Since $|A_k|\geq\eta|Q|$, we have\bee
\int_{A_j^c}(u(x),u(y))dy&=&\int_{(A_j\cup A_k)^c}(u(x),u(y))dy+\int_{A_k}(u(x),u(y))dy\\
&\leq& (|Q|-|A_j|-|A_k|)+|A_k|(1-\delta)\\
&=&(|Q|-|A_j|)-|A_k|\delta\leq (|Q|-|A_j|)-\delta\eta|Q|,
\eee where in the second line we used the Cauchy-Schwartz inequality and \eqref{angle}. The identity \eqref{angle-est} then gives\bee
\left|\int_Qu\right|^2&\leq& \sum_{j=1}^N|A_j|^2+\sum_{j=1}^N |A_j|(|Q|-|A_j|)-\sum_{j=1}^N|A_j|\delta\eta|Q|\\
&=&\left(\sum_{j=1}^N|A_j|\right)|Q|(1-\delta\eta)=(1-\delta\eta)|Q|^2.
\eee Recalling \eqref{id-osc} and the fact that $u$ is arbitrary, we get \eqref{lower-bound-omega}.
\ \newline

(ii): If $\cap_{j=1}^N\mathcal{S}_j\neq\{0\}$, one can take a non-zero vector $v$ in the intersection to see that \eqref{angle} cannot be satisfied for any $\delta>0$. 

Now assume that the property does not hold for a given $\delta>0$. In this case there is $j_\delta$ and $v_\delta\in\mathcal{S}_{j_\delta}$ of unit norm such that for every $k\neq j_\delta$, there exists $w_{\delta,k}\in\mathcal{S}_k$ of unit norm satisfying \bel\label{contradicting-prop}
|(v_\delta,w_{\delta,k})|>(1-\delta).
\eel If the property does not hold for any $\delta$, extracting a subsequence $\delta_n$ such that $\delta_n\rightarrow0$ we can assume that $j_{\delta_n}\equiv j_0$, $v_{\delta_n}$ converges to $v$, and $w_{\delta_n,k}$ converges to $w_k$ for every $k\neq j_0$. Passing to the limit in \eqref{contradicting-prop}, we find $|(v,w_k)|=1$. Since $|v|=|w_k|=1$, the equality condition for the Cauchy-Schwarz inequality tells us that $w_k=e^{i\phi_k}v$ for some $\phi_k\in\R$. Hence $v\in \cap_{j=1}^N\mathcal{S}_j$, i.e., the intersection is not trivial.

The last observation about the value of $\delta$ follows by observing that in the above argument we never used the sets $A_j$. \end{proof}

We proceed to give the proof of Theorem \ref{deg-discrete}.

\subsection{Proof of Theorem \ref{deg-discrete}}
We simplify a bit the notation, writing $\omega(Q)$ and $\omega_{\infty}(\ell)$ in lieu of $\omega(Q,\mathcal{S})$ and $\omega_{\infty}(\ell,\mathcal{S})$, respectively. Recall that, for a fixed scale $\ell>0$, we define
\be
\omega_\infty(\ell):=\liminf_{Q\in\mathcal{G}_\ell,\ Q\rightarrow\infty}\omega(Q).
\ee  By Proposition \ref{s-t-c-prop}, our goal is to prove that for every $\eps>0$ there exists $R<+\infty$ such that\bel\label{deg-stc}
\int_{|x|\geq R}|\psi|^2\leq \eps \left(\int_{\R^n}|\nabla\psi|^2+\int_{\R^n}(V\psi,\psi)\right)\qquad\forall \psi\in C^\infty_c(\R^n;\C^d).
\eel First of all we decompose $\psi$ in its radial and angular part, writing 
\be\psi=\phi u,\ee 
where $\phi=|\psi|$ is scalar and $u$ has unit norm. Notice that $u=\frac{\psi}{|\psi|}$ on $\Omega:=\{\psi\neq0\}$ and  may be arbitrarily defined on $\Omega^c$: this ambiguity will not affect the rest of the proof. Both $\phi$ and $u$ are smooth on $\Omega$ and there we have\be
|\partial_j\psi|^2=|(\partial_j\phi) u+\phi(\partial_ju)|^2=|(\partial_j\phi)u|^2+|\phi(\partial_ju)|^2=|\partial_j\phi|^2+\phi^2|\partial_ju|^2.
\ee The second equality follows from the orthogonality of $u$ and $\partial_ju$, a consequence of the fact that $|u|\equiv1$. Since $\nabla\psi(x)=0$ at almost every $x$ such that $\psi(x)=0$ and the same holds for $\phi$,  we can write \bel\label{Veff}
\int_{\R^n}|\nabla\psi|^2+\int_{\R^n}(V\psi,\psi)=\int_{\R^n}|\nabla\phi|^2+\int_{\R^n}\widetilde{V}\phi^2,
\eel
where $\widetilde{V}:=(Vu,u)+|\nabla u|^2$ on $\Omega$ and $\widetilde{V}:=0$ on $\Omega^c$. Notice that $\phi$ is not smooth, but it is Lipschitz and compactly supported, so that the identity above is meaningful. The non-negative function $\widetilde{V}$ is the ``effective" potential to which the scalar wave-function $\phi$ is subject. This potential is not locally integrable in general, but $\int\widetilde{V}\phi^2$ is finite, as a consequence of the computations above.

Fix $\eps>0$. By assumption (ii), we can choose $\ell>0$ such that $\ell^{-1}\omega_\infty(\ell)\geq \eps^{-1}$, and by assumption (i), we can then choose $R$ so that for every cube $Q\in\mathcal{G}_\ell$ intersecting $\{|y|\geq R\}$, we have  \bel\label{VlargeonL}
(V(x)v,v)\geq \ell^{-2}|v|^2\qquad\forall v\in \mathcal{L}(x),\ \forall x\in Q.
\eel 
Enlarging $R$ if necessary, we can assume that for every such cube $Q$, we have\bel\label{omega-inf}
\omega(Q)\geq \frac{\omega_\infty(\ell)}{2}.
\eel Fix $Q\in \mathcal{G}_\ell$ such that $Q\cap\{|y|\geq R\}\neq0$. We are going to prove that:\bel\label{eff-cubes}
\ell^{-2}\omega_\infty(\ell)^2\int_Q\phi^2\leq C_n\left( \int_Q|\nabla\phi|^2+\int_Q\widetilde{V}\phi^2\right).
\eel Summing over $Q$ and recalling identity \eqref{Veff} and our choice of $\ell$, we see that this implies \eqref{deg-stc}, as we wanted. We can turn to the proof of \eqref{eff-cubes}. 

To analyze separately the contributions of the two terms on the right hand side of \eqref{eff-cubes}, we use the technique of Lemma $2.2$ of \cite{ko-ma-sh}: we consider the compact set\be
F:=\left\{x\in Q: \phi(x)^2\leq\frac{1}{4|Q|}\int_Q\phi^2\right\}.
\ee Let $c_n$ be a small constant depending only on $n$ and to be fixed later: we split our analysis depending on whether the capacity of $F$ is smaller or larger than $c_n\cdot\ell^{n-2}\omega_\infty(\ell)^2$. We recall that, given $F\subseteq Q$, if $n\geq3$,\bel\label{cap3}
\text{Cap}(F):=\inf\left\{\int_{\R^n}|\nabla \eta|^2\colon \eta\in C^\infty_c(\R^n,[0,1]) \text{ such that }\eta=1 \text{ on } F\right\},
\eel while if $n=2$,\bel\label{cap2}
\text{Cap}(F):=\inf\left\{\int_{\R^n}|\nabla \eta|^2\colon \eta\in C^\infty_c(2Q,[0,1]) \text{ such that }\eta=1 \text{ on } F\right\},
\eel which is the capacity relative to $2Q$. In the (easier) case $n=1$, we put $\text{Cap}(F):=0$ if $F=\varnothing$ and $\text{Cap}(F)=\ell_Q^{-1}$ if $F\neq\varnothing$ ($Q$ is an interval in this case).\newline

\par If $\text{Cap}(F)\geq c_n\cdot\ell^{n-2}\omega_\infty(\ell)^2$, we can apply Lemma $2.2$ of \cite{ko-ma-sh} (or rather inequality $(2.7)$ in the proof of that Lemma) to obtain
\be
c_n\cdot\ell^{-2}\omega_\infty(\ell)^2\int_Q\phi^2\leq\frac{\text{Cap}(F)}{|Q|}\int_Q\phi^2 \leq C_n\int_Q|\nabla\phi|^2, 
\ee which implies \eqref{eff-cubes}. If $n=1$, recall that if $\phi$ is Lipschitz on a closed interval $I$ of length $\ell$ and $\phi$ vanishes at a point of $I$, then $\int_I|\phi|^2\leq \ell^2\int_I|\phi'|^2$.\newline

\par If $\text{Cap}(F)\leq c_n\cdot\ell^{n-2}\omega_\infty(\ell)^2$, we define \be
\mathcal{S}_{\text{unit}}(x):=\{v\in\C^d: v\in \mathcal{S}(x)\text{ and }|v|=1\}
\ee
and we split the analysis into two further sub-cases, depending this time on whether the quantity \bel\label{intdist}
\frac{1}{|Q|}\int_{Q\setminus F}\text{dist}(u(x),\mathcal{S}_{\text{unit}}(x))^2dx
\eel is smaller or larger than $c_n\cdot\omega_\infty(\ell)^2$. The integral \eqref{intdist} measures in $L^2$ average and on $Q\setminus F$ how far $u$ is from the distribution of subspaces $\mathcal{S}$. Notice that if $n=1$, taking $c_n\leq1/2$, we have $F=\varnothing$ and part of what follows becomes more elementary.

If \eqref{intdist} $\geq c_n\cdot\omega_\infty(\ell)^2$, we write \be
u(x)=u'(x)+u''(x),\quad u'(x)\in\mathcal{S}(x),\ u''(x)\in\mathcal{L}(x),\qquad\forall x\in Q.
\ee
By the invariance under $V$ of $\mathcal{S}$ and $\mathcal{L}$ and the non-negativity of $V$, we have \be
(Vu,u)=(Vu',u')+(Vu'',u'')\geq (Vu'',u'')\geq \ell^{-2}|u''|^2\qquad\text{ on }Q.
\ee The last inequality follows from \eqref{VlargeonL}. The elementary inequality \be
|u''(x)|^2\geq\frac{1}{2}\text{dist}(u(x),\mathcal{S}_{\text{unit}}(x))^2
\ee allows to estimate\bee
&&\int_Q\widetilde{V}\phi^2\geq \int_{Q\setminus F}(Vu,u)\phi^2 \geq \frac{1}{4|Q|}\int_{Q\setminus F}(Vu,u)\cdot\int_Q\phi^2 \\
&\geq& \frac{\ell^{-2}}{8}\frac{1}{|Q|}\int_{Q\setminus F}\text{dist}(u(x),\mathcal{S}_{\text{unit}}(x))^2dx\cdot\int_Q\phi^2\geq \frac{c_n}{8}\ell^{-2}\omega_\infty(\ell)^2\int_Q\phi^2.
\eee In the first line we used the fact that $F$ contains the zero set of $\psi$, and hence $\widetilde{V}\geq (Vu,u)$ on $Q\setminus F$. Inequality \eqref{eff-cubes} is proved if \eqref{intdist} $\geq c_n\cdot\omega_\infty(\ell)^2$.

The remaining case to be analyzed is: \eqref{intdist} $\leq c_n\cdot\omega_\infty(\ell)^2$. We are going to prove that if $c_n$ is small enough, then \bel\label{grad-u}
\frac{1}{|Q|}\int_{Q\setminus F}|\nabla u|^2\geq\frac{\ell^{-2}\omega_\infty(\ell)^2}{C_n}.
\eel
Since $\int_Q\widetilde{V}\phi^2\geq \frac{1}{4|Q|}\int_{Q\setminus F}|\nabla u|^2\int_Q\phi^2$, this concludes the proof of Theorem \ref{deg-discrete}.

By the measurability of $\mathcal{S}$ and the hypothesis on \eqref{intdist}, we can find $v\in\Gamma(Q,\mathcal{S})$ such that \bel\label{def-v} 
\frac{1}{|Q|}\int_{Q\setminus F}|u-v|^2dx\leq c_n\cdot\omega_\infty(\ell)^2.
\eel Moreover, since we are under the assumption that $F$ has small capacity, there exists $\eta\in C^\infty_c(\R^n,[0,1])$ such that $\eta\equiv1$ on $F$ (and $\eta\equiv0$ on $(2Q)^c$ if $n=2$) and \bel\label{grad-eta}
\int_{\R^n}|\nabla\eta|^2\leq 2c_n\cdot\ell^{n-2}\omega_\infty(\ell)^2.
\eel By H\"older inequality and Sobolev embedding, we have\bel\label{L2-eta}
\frac{1}{|Q|}\int_Q\eta^2\leq C_nc_n\cdot\omega_\infty(\ell)^2.
\eel 
Recalling \eqref{omega-inf}, we have\bee
\frac{\omega_\infty(\ell)}{2}&\leq& \omega(Q)\leq \sqrt{\frac{1}{|Q|}\int_Q\left|v-\frac{1}{|Q|}\int_Q v\right|^2}\\
&\leq &\sqrt{\frac{1}{|Q|}\int_Q\left|(1-\eta)v-\frac{1}{|Q|}\int_Q (1-\eta)v\right|^2}+\sqrt{\frac{1}{|Q|}\int_Q\left|\eta v-\frac{1}{|Q|}\int \eta v\right|^2}\\
&\leq &\sqrt{\frac{1}{|Q|}\int_Q\left|(1-\eta)v-\frac{1}{|Q|}\int_Q (1-\eta)v\right|^2}+\sqrt{\frac{1}{|Q|}\int_Q\eta^2}.
\eee In the last line we used the fact that $w\mapsto w-\frac{1}{|Q|}\int_Qw$ is a projection operator.
Applying \eqref{L2-eta} and choosing $c_n$ small enough, we can absorb the last term on the left. We can then proceed to estimate\bee
&&\sqrt{\frac{1}{|Q|}\int_Q\left|(1-\eta)v-\frac{1}{|Q|}\int_Q (1-\eta)v\right|^2} \\
&\leq& \sqrt{\frac{1}{|Q|}\int_Q\left|(1-\eta)(v-u)-\frac{1}{|Q|}\int_Q (1-\eta)(v-u)\right|^2}\\
&+&\sqrt{\frac{1}{|Q|}\int_Q\left|(1-\eta)u-\frac{1}{|Q|}\int_Q (1-\eta)u\right|^2}\\ 
&\leq& \sqrt{\frac{1}{|Q|}\int_{Q\setminus F}|v-u|^2}+C_n\ell\sqrt{\frac{1}{|Q|}\int_Q|\nabla[(1-\eta)u]|^2}.
\eee In the last line we used the fact that $\eta=1$ on $F$ and Poincar\'e inequality. To justify its application, one can notice that $u$ is in the Sobolev space $H^1(\mathring{Q}\setminus F,\C^d)$, because $u$ is smooth on $\Omega=\{\psi\neq0\}$ and $\overline{Q\setminus F}\cap\Omega^c=\varnothing$, hence $(1-\eta)u\in H^1(\mathring{Q},\C^d)$.

Using \eqref{def-v} and Leibnitz rule, \bee
\frac{\omega_\infty(\ell)}{4} &\leq& \sqrt{c_n}\cdot\omega_\infty(\ell)+C_n\ell\sqrt{\frac{1}{|Q|}\int_Q|\nabla\eta|^2}+C_n\ell\sqrt{\frac{1}{|Q|}\int_{Q\setminus F}|\nabla u|^2}\\ 
&\leq& C_n\sqrt{c_n}\cdot\omega_\infty(\ell)+C_n\ell\sqrt{\frac{1}{|Q|}\int_{Q\setminus F}|\nabla u|^2},
\eee where in the last inequality we used \eqref{grad-eta}. If $c_n$ is appropriately small, we finally obtain \eqref{grad-u}. The proof is concluded.

\subsection{Examples of potentials satisfying the hypotheses of Theorem \ref{deg-discrete}}\label{example-deg-subsection}
In this subsection we explicitly describe a class of potentials that satisfy the conditions of Theorem \ref{deg-discrete}. 

We begin by describing a partition (up to sets of measure zero) $\mathcal{Q}$ of $\R^n$ into cubes whose sides become smaller and smaller when one moves away from the origin. Fix two sequences of natural numbers $\{a_m\}_{m\geq 1}$ and $\{b_m\}_{m\geq 1}$. Assume that $a_m\geq 2$ and that $\{b_m\}_{m\geq1}$ is increasing. Then $\mathcal{Q}:=\cup_{m\geq 1}\mathcal{Q}_m$, where \be
\mathcal{Q}_m:=\{Q\in\mathcal{G}_{1/(a_1\cdots a_m)}\ \colon\ Q\subseteq \overline{Q(0,2b_m)\setminus Q(0,2b_{m-1})}\},
\ee and $b_0:=0$. We are partitioning the cube $Q(0,2b_1)$ using the cubes of the canonical grid of step $1/a_1$ (and the factor $2$ is needed to ensure that this is possible). Then we are partitioning the region between $Q(0,2b_1)$ and $Q(0,2b_2)$ using the smaller cubes of the canonical grid of step $1/(a_1\cdot a_2)$, and so on. The resulting family $\mathcal{Q}$ satisfies the following property:\begin{enumerate}
\item[($\star$)] if $Q\in \mathcal{G}_{1/(a_1\cdots a_m)}$ and is not contained in $Q(0,2b_{m-1})$, then it is an essentially (i.e., up to sets of measure $0$) disjoint union of cubes of $\mathcal{Q}$,
\end{enumerate} 

Now fix a finite family $\{\mathcal{S}_j\}_{j=1}^N$ of non trivial subspaces of $\C^d$ such that \be\cap_{j=1}^N\mathcal{S}_j=\{0\},\ee and a disjoint partition $[0,1]^n=\cup_{j=1}^NA_j$ into parts of strictly positive measure. Let $\mathcal{S}_0(x):=\mathcal{S}_j$ for every $x\in A_j$. For every $Q\in\mathcal{Q}$, we fix an affine transformation $T_Q$ of $\R^n$ mapping $Q$ onto $[0,1]^n$, and we define $\mathcal{S}_Q(x):=\mathcal{S}_0\circ T_Q(x)$ ($x\in Q$). We can now build the mapping $\mathcal{S}:\R^n\rightarrow \mathcal{V}(d)$ gluing together the various $\mathcal{S}_Q$'s as $Q$ varies in $\mathcal{Q}$. Notice that $\mathcal{S}$ is almost everywhere defined (because the $Q$'s intersect on a set of measure $0$) and it is measurable. 

We can finally define our potentials. Let $\gamma\in L^1_{loc}(\R^n,[0,+\infty))$ be such that $\lim_{x\rightarrow\infty}\gamma(x)=+\infty$. Let $\mathcal{L}(x)$ be the orthogonal in $\C^d$ to $\mathcal{S}(x)$ and define:\be
V(x)v:=0 \quad\forall v\in \mathcal{S}(x),\qquad V(x)v:=\gamma(x)v \quad\forall v\in \mathcal{L}(x).
\ee It is easy to check that $\mathcal{L}$ is measurable and that $V\in L^1_{loc}(\R^n,H_d^{\geq0})$ satisfies condition (i) of Theorem \ref{deg-discrete}. Moreover, by our construction\bel\label{unif-pos}
\inf_{m\in\N}\omega_\infty\left(\frac{1}{a_1\cdots a_m},\mathcal{S}\right)>0, 
\eel which immediately implies condition (ii). To verify this, fix $m$ and notice that, by property ($\star$) of $\mathcal{Q}$, any given cube $Q$ in the canonical grid of step $1/(a_1\cdots a_m)$ which is far enough from the origin is the union of a finite collection $\{Q_k\}$ of cubes in $\mathcal{Q}$. Recalling the definition of $\mathcal{S}$, we can write\be
Q=\cup_{j=1}^N A_j(Q),\ \text{ where } A_j(Q):=\cup_k T_{Q_k}^{-1}(A_j).
\ee
It is immediate to verify that $\mathcal{S}\equiv\mathcal{S}_j$ on $A_j(Q)$, and that $d_j=|A_j(Q)|/|Q|$ is independent of the cube $Q$ and of $m$. By part (ii) of Lemma \ref{angle}, we conclude that $\omega(Q,\mathcal{S})$ is bounded below by a constant that is independent of $Q$ and $m$. Hence \eqref{unif-pos} holds and the hypotheses of Theorem \ref{deg-discrete} are verified by $V$. 

Choosing in particular $\mathcal{S}_j=\text{span}\{e_k:k\neq j\}$ ($j=1\dots,d$), where $\{e_k\}$ is the canonical basis of $\C^d$, one can exhibit a potential $V$ that has everywhere rank $1$ and such that $H_V$ has discrete spectrum, showing that the reverse implication of \eqref{comparison} does not hold in general.

\section{Non-negative polynomial $2\times 2$ potentials}\label{pol-section}

\subsection{Setting} The goal of this section is to prove Theorem \ref{pol2}. Let then $V:\R^n\longrightarrow H_2^{\geq0}$ be a fixed non-negative $2\times 2$ matrix-valued potential such that $(V(x)u,u)$ is a real polynomial for every $u\in\R^2$. We assume that $H_V$ has discrete spectrum, and our task is to show that the scalar operator $H_\lambda$ has discrete spectrum too. 

Recall that $\lambda(x)$ is the minimal eigenvalue of $V(x)$. We call  the maximal eigenvalue $\mu(x)$, the trace $\text{tr}(x)=\lambda(x)+\mu(x)$ and the determinant $\det(x)=\lambda(x)\mu(x)$. Notice that $\text{tr}(x)$ and $\det(x)$ are polynomials, while $\lambda(x)$ and $\mu(x)$ typically are not polynomials.

To prove the discreteness of the spectrum of $H_\lambda$ we resort to the characterization of \cite{ma-sh}.

\begin{thm}\label{maz'ya-shubin} Let $W\in L^1_{\text{loc}}(\R^n)$ be scalar and non-negative. 

Assume that there exists $\gamma>0$ such that\bel\label{toprovepol2}
\lim_{x\rightarrow\infty}\  \inf_{F\in \mathcal{N}_\gamma(Q(x,\ell))}\int_{Q(x,\ell)\setminus F}W(y)dy=+\infty\qquad\forall \ell>0,
\eel where $\mathcal{N}_\gamma(Q)$ is the set of $\gamma$-negligible subsets of $Q$, i.e., compact subsets $F$ of $Q$ such that $\text{Cap}(F)\leq\gamma \text{Cap}(Q)$. Then $H_W$ has discrete spectrum. 

Viceversa, if $H_W$ has discrete spectrum, then \eqref{toprovepol2} holds for any $\gamma<1$.
\end{thm}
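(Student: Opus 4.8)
The plan is to derive both implications from the abstract compactness criterion of Proposition~\ref{s-t-c-prop}, which reduces discreteness of $H_W$ to the family of tail estimates $\int_{|x|\geq R}|\psi|^2\leq \eps\,\mathcal{E}_W(\psi)$ for $\psi\in C^\infty_c(\R^n)$. Writing $c_\gamma(Q):=\inf_{F\in\mathcal{N}_\gamma(Q)}\int_{Q\setminus F}W$, the bridge between this global estimate and $c_\gamma$ is a single cube-by-cube inequality, which I would establish first: for every cube $Q$ and every $\psi$,
\be
\int_Q|\psi|^2\ \leq\ C_n\gamma^{-1}\ell_Q^2\int_Q|\nabla\psi|^2\ +\ \frac{4|Q|}{c_\gamma(Q)}\int_Q W|\psi|^2 .
\ee
To prove it I would set $F:=\{x\in Q:\ |\psi(x)|^2\leq\frac{1}{4|Q|}\int_Q|\psi|^2\}$, a compact subset of $Q$, and run the same dichotomy used in the proof of Theorem~\ref{deg-discrete}. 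If $\text{Cap}(F)\geq\gamma\,\text{Cap}(Q)$, inequality $(2.7)$ of \cite{ko-ma-sh} gives $\frac{\text{Cap}(F)}{|Q|}\int_Q|\psi|^2\leq C_n\int_Q|\nabla\psi|^2$, and since $\text{Cap}(Q)\sim\ell_Q^{n-2}$ this is the first term. If instead $\text{Cap}(F)<\gamma\,\text{Cap}(Q)$, then $F\in\mathcal{N}_\gamma(Q)$, so $\int_{Q\setminus F}W\geq c_\gamma(Q)$; since $|\psi|^2>\frac{1}{4|Q|}\int_Q|\psi|^2$ on $Q\setminus F$, one gets $\int_Q W|\psi|^2\geq\frac{c_\gamma(Q)}{4|Q|}\int_Q|\psi|^2$, which is the second term. (The cases $n=1,2$ need only cosmetic changes tied to the capacity conventions fixed above; when $n=1$ the bound is the elementary $\int_I|\psi|^2\leq\ell_I^2\int_I|\psi'|^2$ valid as soon as $F\neq\varnothing$.)

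\emph{Sufficiency.} Assume the limit condition holds for some $\gamma>0$ and fix $\eps>0$. First pick $\ell>0$ so small that $C_n\gamma^{-1}\ell^2\leq\eps/2$. By hypothesis $c_\gamma(Q)\to+\infty$ as $Q\to\infty$, so there is $R$ with $c_\gamma(Q)\geq 8|Q|/\eps$ for every $Q\in\mathcal{G}_\ell$ meeting $\{|y|\geq R\}$; for such $Q$ the second coefficient in the displayed inequality is $\leq\eps/2$. Summing the inequality over those (essentially disjoint, and covering $\{|y|\geq R\}$) cubes yields $\int_{|y|\geq R}|\psi|^2\leq\eps\,\mathcal{E}_W(\psi)$, and Proposition~\ref{s-t-c-prop} gives discreteness. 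The need to take $\ell$ arbitrarily small is exactly why the condition is demanded for every $\ell>0$.

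\emph{Necessity.} Assume $H_W$ has discrete spectrum and argue by contradiction: suppose that for some $\gamma<1$, some $\ell>0$ and some $M<+\infty$ there are cubes $Q_k\to\infty$ of side $\ell$ and $F_k\in\mathcal{N}_\gamma(Q_k)$ with $\int_{Q_k\setminus F_k}W\leq M$. For each $k$ take a near-minimiser $\eta_k$ for the capacity of $F_k$, localized by a fixed cutoff to $2Q_k$ (a standard Leibniz-plus-Sobolev/Friedrichs computation keeps $\int|\nabla(\eta_k\chi_k)|^2\leq C_n\gamma\,\ell^{n-2}$), and set $\psi_k:=(1-\eta_k\chi_k)\chi_k$, so that $\psi_k$ vanishes on $F_k$, $0\leq\psi_k\leq1$ and $\text{supp}\,\psi_k\subseteq 2Q_k$. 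Then $\int W\psi_k^2\leq C_{n,\ell}\,M$ (subdivide $2Q_k$ into finitely many translates of $Q_k$) and $\int|\nabla\psi_k|^2\leq C_{n,\ell}$, so $\mathcal{E}_W(\psi_k)$ is bounded uniformly in $k$; meanwhile $\int\psi_k^2\geq\int_{Q_k}(1-\eta_k)^2\geq|Q_k|-2\int_{Q_k}\eta_k$. For $k$ large, $2Q_k\subseteq\{|y|\geq R\}$, so the tail estimate is contradicted \emph{provided $\int_{Q_k}\eta_k$ stays bounded away from $|Q_k|$}. For $n=1$ this is immediate, since $\gamma<1$ forces $F_k=\varnothing$ and one simply takes $\psi_k=\chi_k$.

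\emph{The main obstacle} is precisely that last proviso for $n\geq2$. A Friedrichs inequality gives $\int\eta_k^2\lesssim\gamma|Q_k|$, hence $\int_{Q_k}\eta_k\lesssim\sqrt\gamma\,|Q_k|$, which suffices only when $\gamma$ lies below a dimensional threshold. Obtaining necessity in the full range $\gamma<1$ — as stated — requires the sharper capacitary analysis of \cite{ma-sh}: one must use that $\text{Cap}(F_k)$ is \emph{strictly} smaller than $\text{Cap}(Q_k)$, not merely small, to produce a test function supported off $F_k$ whose $L^2(Q_k)$-mass is bounded below while its Dirichlet energy is $O(\ell^{n-2})$ (equivalently, a subdivision argument showing that the limit condition for small $\gamma$ already forces it for every $\gamma<1$). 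This refined step is the technical heart of the theorem; everything else is the routine bookkeeping sketched above. Since this theorem is quoted from \cite{ma-sh}, in the paper itself one would of course simply cite it.
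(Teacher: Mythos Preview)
The paper does not give its own proof of this theorem; it is quoted from \cite{ma-sh} as a known result and used as a black box, exactly as you anticipate in your last sentence. Your sufficiency sketch is correct and is essentially the argument of \cite{ko-ma-sh}: the same capacity dichotomy on the sublevel set $F=\{|\psi|^2\leq\frac{1}{4|Q|}\int_Q|\psi|^2\}$ is reused verbatim in the present paper in the proof of Theorem~\ref{deg-discrete}.

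Your necessity sketch, however, has a gap beyond the one you already flag. Your test function $\psi_k=(1-\eta_k\chi_k)\chi_k$ is supported in $2Q_k$, but the hypothesis only bounds $\int_{Q_k\setminus F_k}W$; the sentence ``subdivide $2Q_k$ into finitely many translates of $Q_k$'' does not rescue $\int W\psi_k^2$, because you have no information whatsoever about $W$ on $2Q_k\setminus Q_k$ (the negligible sets $F_k$ live only in $Q_k$). The construction that actually works --- and which the paper reproduces in its proof of Proposition~\ref{key-pol2} for good cubes, following \cite{ma-sh} --- takes the cutoff $\eta_\delta$ supported \emph{inside} $Q_k$ and uses the equilibrium potential $P$ of a smooth enlargement $F'$ of $F_k$: then $\psi=\eta_\delta(1-P)$ is supported in $Q_k$ and vanishes on $F'\supseteq F_k$, so $\int W\psi^2\leq\int_{Q_k\setminus F_k}W$ directly; an integration by parts exploiting the harmonicity of $P$ outside $F'$ gives $\int|\nabla\psi|^2=\int|\nabla\eta_\delta|^2(1-P)^2\leq C_n\delta^{-2}\ell^{n-2}$; and the lower bound on $\int\psi^2$ for every $\gamma<1$ is exactly Lemma~\ref{bound-eqpot} (equivalently, inequality (3.11) of \cite{ma-sh}), which is the ``sharper capacitary analysis'' you allude to.
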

Here the capacity of a subset of $Q$ is defined by formulas \eqref{cap3}, \eqref{cap2}, and the comments following them. In particular, if $n=2$, it is the capacity relative to $Q(x,2\ell)$, while 
if $n=1$, for every $\gamma<1$, $\mathcal{N}_\gamma(Q)=\{\varnothing\}$ and the characterization is simply in terms of the integral $\int_{Q(x,\ell)}W(y)dy$.

We can now start the proof of Theorem \ref{pol2}.\newline

\par By Proposition \ref{s-t-c-prop}, for every $\eps>0$ we can fix $R(\eps)<+\infty$ such that
\bel\label{hyppol2}
\int_{|y|\geq R(\eps)}|\psi|^2\leq \eps\cdot\mathcal{E}_V(\psi)\qquad\forall \psi\in E_V.
\eel 
We are going to prove the following. 
\begin{prop}\label{key-pol2}
There exist $\gamma, c_1,c_2>0$ depending only on $V$, such that if $\ell>0$, $\eps\leq c_1\ell^2$, $Q(x,\ell)\subseteq \{|y|\geq R(\eps)\}$ and $F\in \mathcal{N}_\gamma(Q(x,\ell))$, then \be
\frac{1}{\ell^n}\int_{Q(x,\ell)\setminus F}\lambda(y)dy\geq c_2\eps^{-1}.
\ee
\end{prop}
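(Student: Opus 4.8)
The plan is to exploit the dichotomy mentioned in the introduction (good cubes vs.\ bad cubes) at the scale $\ell$, and to transfer information about the vector-valued quadratic form $\mathcal{E}_V$ into information about the scalar integral $\int\lambda$. The starting point is \eqref{hyppol2}: for a suitable class of test functions $\psi$ supported near $Q=Q(x,\ell)$ we will produce, from \eqref{hyppol2}, a lower bound on $\int_{Q\setminus F}\lambda$. Concretely, given $F\in\mathcal{N}_\gamma(Q)$, I would pick a cutoff $\eta\in C^\infty_c$ with $\eta\equiv1$ on $F$, $\eta$ supported in $Q$ (or $2Q$ when $n=2$), $0\le\eta\le1$, and $\int|\nabla\eta|^2\le C\,\mathrm{Cap}(F)\le C\gamma\,\mathrm{Cap}(Q)\approx C\gamma\ell^{n-2}$; then apply \eqref{hyppol2} to $\psi(y):=(1-\eta(y))\,\chi(y)\,w(y)$, where $\chi$ is a fixed bump adapted to $Q$ and $w$ is a unit vector (constant, or slowly varying — see below) chosen so that $(V w,w)$ is comparable to $\lambda$ on a large portion of $Q\setminus F$. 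The term $\int|\nabla\psi|^2$ is then controlled by $C_n\ell^{-2}|Q| + C\gamma\ell^{-2}|Q|$ (the first from $\nabla\chi$, the second from $\nabla\eta$ on $Q\setminus F$), and $\int_{|y|\ge R(\eps)}|\psi|^2\gtrsim |Q|$ once $\ell$ is large compared to the geometry; so \eqref{hyppol2} gives $|Q|\lesssim \eps(\ell^{-2}|Q| + \int_{Q\setminus F}(Vw,w))$, hence $\int_{Q\setminus F}(Vw,w)\gtrsim \eps^{-1}|Q| - \ell^{-2}|Q|\gtrsim \eps^{-1}|Q|$ as soon as $\eps\le c_1\ell^2$ with $c_1$ small.

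The crux is therefore to choose the unit section $w$ on $Q$ so that $(V(y)w(y),w(y))\lesssim \lambda(y)$ for almost every $y\in Q\setminus F$, while keeping $\int_Q|\nabla w|^2$ under control (comparable to $\ell^{-2}|Q|$ at worst). This is exactly where the good/bad dichotomy of Lemma \ref{good-bad} and the gradient estimate \eqref{grad-v} for eigenvectors enter, and where $d=2$ is used. On a \emph{good} cube the two eigenvalues of $V$ are everywhere comparable, so \emph{any} unit vector — say the constant $e_1$ — satisfies $(Vw,w)\le \mu\le C\lambda$, and there is nothing to control in the gradient. On a \emph{bad} cube the eigenvalue $\lambda$ is isolated from $\mu$ uniformly on $Q$, so the eigenprojection onto the minimal eigenspace is smooth with a quantitative bound, and I can take $w$ to be the unit eigenvector associated with $\lambda$; the rigidity of polynomial $V$ (degree bounds) yields $|\nabla w|\lesssim \ell^{-1}$ on $Q$, i.e.\ $\int_Q|\nabla w|^2\lesssim \ell^{-2}|Q|$, which is absorbed just like the $\nabla\chi$ and $\nabla\eta$ terms. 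In both cases we land on $\int_{Q\setminus F}(Vw,w)\gtrsim\eps^{-1}|Q|$ with $(Vw,w)\le C\lambda$, giving $\int_{Q\setminus F}\lambda\gtrsim C^{-1}\eps^{-1}|Q|$, which is the assertion (with $c_2$ absorbing $C^{-1}$ and the normalization of $\chi$).

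The main obstacle I anticipate is precisely the bad-cube gradient estimate \eqref{grad-v}: one must show that on a cube where $\mu-\lambda$ is bounded below (in a scale-invariant way) by, say, $\eta_0\,\mu$ or $\eta_0\,\ell^{-2}$ throughout $Q$, the minimal-eigenvalue eigenvector can be chosen with $|\nabla w|\le C\ell^{-1}$. This rests on the fact that the entries of $V$ are polynomials of bounded degree, so $\text{tr}$, $\det$, and hence $\lambda=\tfrac12(\text{tr}-\sqrt{\text{tr}^2-4\det})$ and the matrix $V-\lambda\mathbb{I}$, together with their derivatives, obey Bernstein-type inequalities on $Q$; dividing by the spectral gap (comparable to $\mu$, and $\mu\ge\ell^{-2}$ after a further reduction using condition (i)-type information, or handled by homogeneity) converts this into the bound on $\nabla w$. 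A secondary technical point is the bookkeeping of the capacity cutoff near $F$: one needs $\eta$ supported appropriately (relative $2Q$ when $n=2$) and must check that $\int_{Q\setminus F}|\nabla\eta|^2$, $\int_{Q\setminus F}|\nabla w|^2$, and $\int_Q|\nabla\chi|^2$ are all $O(\ell^{-2}|Q|)$ simultaneously, and that $\gamma$ can be taken small enough (independent of $\ell$) that the $\gamma$-term never competes with the main term. The cases $n=1$ (where $\mathcal{N}_\gamma(Q)=\{\varnothing\}$, so $\eta$ disappears and $F=\varnothing$) and $n=2$ (relative capacity) should be remarked on separately but are easier.
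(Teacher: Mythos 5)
Your plan has the same architecture as the paper's proof: the good/bad dichotomy of Lemma~\ref{good-bad}, a capacitary cutoff near $F$, a bump, and a unit section $w$ chosen to make $(Vw,w)\lesssim\lambda$ — constant on good cubes, the minimal eigenvector with the gradient bound~\eqref{grad-v} on bad cubes. But there is a genuine slip in how you invoke the dichotomy. Lemma~\ref{good-bad} does not classify $Q=Q(x,\ell)$ itself as good or bad; it only furnishes a \emph{subcube} $Q'=Q(y,c\ell)\subset Q$ on which one of the two alternatives holds. On a general $Q$ the eigenvalues may be comparable on one part and far apart on another, and there may be eigenvalue crossings, so there is no way to define $w$ on all of $Q$ with either the good bound $(Vw,w)\le\mu\le 8\lambda$ or a smooth eigenvector section. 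Your bump $\chi$ and section $w$ must live on (a slight shrinking of) $Q'$; one then still concludes because $\int_{Q'\setminus F}\lambda\le\int_{Q(x,\ell)\setminus F}\lambda$, and the scales $s=c\ell$ and $\ell$ differ only by the fixed constant $c$ of Lemma~\ref{poly-zero}.

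Two secondary remarks. First, the lower bound on $\int|\psi|^2$ is not mere bookkeeping: you must rule out the cutoff swallowing the bump. A Sobolev/Poincar\'e bound $\frac{1}{|Q'|}\int_{Q'}\eta^2\lesssim\gamma$ would do; the paper instead passes to a smooth neighborhood $F'\supseteq F\cap Q'$ and uses the equilibrium potential $P$ of $F'$, whose harmonicity kills the cross term in $\int|\nabla\psi|^2$ (so no Leibniz loss there) and for which Lemma~\ref{bound-eqpot} gives the lower bound $\int_{Q'}(1-P)^2\gtrsim|Q'|$. Second, in the bad-cube gradient estimate there is no ``condition (i)'' to reduce with — that belongs to Theorem~\ref{deg-discrete}, not this one; the homogeneity route you mention as an alternative is the correct one and is what the paper does: on $Q'$ the gap $\mu-\lambda\ge\mu/2$ together with the near-constancy of $\mu$ (part (ii) of Lemma~\ref{good-bad}) cancels, via Lemma~\ref{grad-poly-pot}, against the Bernstein-type bound on $\|\partial_j V\|_{op}$, and no absolute lower bound $\mu\ge\ell^{-2}$ is needed or used.
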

Proposition \ref{key-pol2} entails that $\lambda$ verifies condition \eqref{toprovepol2} of Theorem \ref{maz'ya-shubin}, and hence it allows to conclude that $H_\lambda$ has discrete spectrum. We prove it by testing \eqref{hyppol2} on appropriate test functions. To define them, we exploit a dichotomy to which the next subsection is dedicated.

\subsection{Good and bad cubes}

\begin{lem}\label{good-bad} There is a constant $c>0$ depending only on $n$ and the degree of $\emph{tr}(x)$ and $\det(x)$ such that for every cube $Q$ there is a smaller cube $Q'\subseteq Q$ of side $\ell_{Q'}= c \ell_Q$ such that $\lambda(x)<\mu(x)$ for every $x\in Q'$ and:\begin{enumerate}
\item[(i)] either $\frac{1}{8}\mu(x)\leq\lambda(x)$ for every $x\in Q'$,
\item[(ii)] or $2\lambda(x)\leq \mu(x)$ for every $x\in Q'$ and $\sup_{x\in Q'}\mu(x)\leq 4\inf_{x\in Q'}\mu(x)$.
\end{enumerate}
\end{lem}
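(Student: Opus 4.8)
The plan is to work with the two polynomials $\mathrm{tr}(x) = \lambda(x)+\mu(x)$ and $\det(x) = \lambda(x)\mu(x)$, since these are the only genuinely polynomial objects available, and to translate the desired dichotomy into a statement about the ratio $\det(x)/\mathrm{tr}(x)^2$, which controls the eigenvalue gap. Indeed $\lambda(x) < \mu(x)$ (i.e., $V(x)$ is not scalar at $x$) is equivalent to $\det(x) < \tfrac14 \mathrm{tr}(x)^2$, and more quantitatively, the ratio $\rho(x) := \lambda(x)/\mu(x) \in [0,1]$ satisfies $\det/\mathrm{tr}^2 = \rho/(1+\rho)^2$, a monotone increasing bijection from $[0,1]$ onto $[0,\tfrac14]$. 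So the three regimes $\{\rho \geq \tfrac18\}$, $\{\rho \leq \tfrac12\}$, and the "non-scalar" condition $\rho < 1$ can each be described by an inequality of the form $\det(x) \lessgtr \kappa \cdot \mathrm{tr}(x)^2$ for an explicit constant $\kappa$. First I would fix such a threshold $\kappa_0$ (corresponding say to $\rho = \tfrac14$) separating the two cases, and on a given cube $Q$ distinguish whether the polynomial inequality $\det(x) \geq \kappa_0 \mathrm{tr}(x)^2$ holds at the center of $Q$, or a suitable nearby point, or fails there.

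The key tool is a \emph{remez-type} or \emph{doubling} property for non-negative polynomials: a non-negative polynomial $p$ of bounded degree cannot be small on a large portion of a cube without being comparably small on the whole cube, and more to the point, if $p(x_0) \geq t$ at some point $x_0 \in Q$ then $p(x) \geq c \cdot t$ on a sub-cube $Q' \subseteq Q$ of comparable size centered near $x_0$, with $c$ depending only on the degree and $n$; symmetrically, if $p$ vanishes to high order or is small at $x_0$, it stays small on a sub-cube. This is exactly the mechanism behind the fact (cited earlier, \cite{ricci-stein}) that non-negative polynomials lie in $A_\infty$ with degree-dependent constants. Applying this to $\mathrm{tr}(x)$, to $\det(x)$, and to the polynomial $\tfrac14\mathrm{tr}(x)^2 - \det(x) \geq 0$ (which is non-negative precisely because $\lambda \leq \mu$ always), I would produce, by intersecting finitely many such sub-cubes, a single sub-cube $Q' \subseteq Q$ of side $c\,\ell_Q$ on which: $\mathrm{tr}$, $\det$, and $\tfrac14\mathrm{tr}^2 - \det$ are each comparable to their values at the center within absolute constants. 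On such a $Q'$ the ratio $\rho(x)$ is pinned to within a bounded factor of $\rho(x_0)$, which delivers case (i) when $\rho(x_0)$ is bounded below and case (ii) when $\rho(x_0)$ is bounded above; in case (ii) the extra claim $\sup_{Q'}\mu \leq 4\inf_{Q'}\mu$ follows because $\mu \approx \mathrm{tr}$ there (as $\rho$ is small) and $\mathrm{tr}$ is already controlled, after possibly shrinking $Q'$ once more by the same doubling lemma applied to the polynomial $\mathrm{tr}$ alone. One has to be slightly careful that the comparison constants obtained for $\rho$ are compatible with the specific numerical constants $\tfrac18$, $2$, $4$ in the statement; this is just a matter of choosing $\kappa_0$ and the degree-dependent constant $c$ small enough, and possibly iterating the sub-cube extraction a bounded number of times, which only worsens $c$ by a bounded factor.

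The main obstacle I anticipate is the precise quantitative form of the doubling/Remez lemma for the \emph{difference} $\tfrac14\mathrm{tr}(x)^2 - \det(x)$: this polynomial is non-negative but may vanish on a large set (exactly the set where $V$ is scalar), so one cannot expect it to be bounded below on $Q'$ in case (i) — rather one needs the two-sided statement "comparable to its value at a well-chosen point," and the well-chosen point must be selected so that it is not atypically close to the zero set. The cleanest route is probably to choose the base point $x_0$ inside $Q$ as a point maximizing (or nearly maximizing) $\det(x)/\mathrm{tr}(x)^2$ over $Q$ in case we are aiming for (i), and nearly minimizing it in case (ii), so that the relevant polynomial is comparable to its \emph{maximum} on a sub-cube — a one-sided lower bound near a near-maximizer, which is the easy direction of the doubling lemma and holds with degree-dependent constants. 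Combining this choice of base point with the standard $A_\infty$ estimates for $\mathrm{tr}$ and $\det$ separately should close the argument; the bookkeeping of how many times one shrinks the cube, and tracking that $c$ depends only on $n$ and the degrees of $\mathrm{tr}$ and $\det$, is routine but needs care.
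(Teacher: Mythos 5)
Your proposal differs genuinely from the paper's proof, and the central tool you are missing is a simple observation that removes essentially all the quantitative difficulties you flag. The paper constructs a single auxiliary polynomial
\[
p(x):=\left(\tfrac{1}{8}\,\mathrm{tr}(x)^2-\det(x)\right)\left(\tfrac{1}{4}\,\mathrm{tr}(x)^2-\det(x)\right)
\]
and applies a zero-avoidance lemma (Lemma~\ref{poly-zero}: a non-zero real polynomial of bounded degree admits, inside any cube, a sub-cube of comparable side that avoids its zero set, proved by the same compactness-contradiction device you allude to). On such a sub-cube $Q'$, each factor of $p$ has a fixed sign. The second factor is always $\geq 0$, so it must be strictly positive on $Q'$, which gives $\lambda<\mu$ there. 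The sign of the first factor then discriminates between the two cases by itself: if $\tfrac{1}{8}\mathrm{tr}^2<\det$ on $Q'$, the elementary chain $\tfrac{\mu}{8}\leq\tfrac{\mathrm{tr}}{8}<\tfrac{\det}{\mathrm{tr}}\leq\lambda$ yields case~(i); if $\det<\tfrac{1}{8}\mathrm{tr}^2$ on $Q'$, then $2\lambda\leq 4\tfrac{\det}{\mathrm{tr}}<\tfrac{\mathrm{tr}}{2}\leq\mu$, and one further shrinking using the standard doubling property of the non-negative polynomial $\mathrm{tr}$ (to make $\mu\approx\mathrm{tr}$ approximately constant) completes case~(ii).

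Your route—running doubling/Remez estimates for $\mathrm{tr}$, $\det$, and $\tfrac14\mathrm{tr}^2-\det$ separately and then classifying by the value of $\rho=\lambda/\mu$ near a well-chosen base point—is plausible, but the pieces you label "routine bookkeeping" are precisely where the argument is delicate. You would need the sharp $(1+\eps)$-form of the doubling lemma (not just a fixed constant $K$) to keep the accumulated multiplicative errors small enough to land in the overlap $[1/8,1/2]$ of the two regimes; you would need to iterate the sub-cube extraction sequentially (intersecting sub-cubes from different applications does not produce a cube); and the "base point near the maximizer of $\rho$" choice is genuinely awkward to set up, since you cannot decide which case you are aiming for until $Q'$ is already located. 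By contrast, the paper's auxiliary polynomial extracts only a \emph{sign}, not a quantitative comparability, so the numerical thresholds $1/8$ and $1/4$ are hard-wired into $p$ and the compactness lemma carries no constants to be tuned. The key idea your proposal lacks is that the entire dichotomy can be encoded as the condition $p\neq 0$ on a sub-cube, which a single application of Lemma~\ref{poly-zero} delivers.
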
 

If the first condition above holds we say that $Q$ is a good cube, otherwise we say that $Q$ is a bad cube. The further condition imposed on bad cubes is somehow technical and allows to assume $\mu$ approximately constant.
\par To prove Lemma \ref{good-bad}, we need an elementary lemma expressing the fact that the zero-set of a real-valued polynomial of bounded degree cannot be too dense in a given cube.

\begin{lem}\label{poly-zero}
For every $n,D\in\N$, there exists $c>0$ such that the following holds: if $p$ is a non-zero real-valued polynomial in $n$ variables of degree $\leq D$ and $Q\subseteq \R^n$ is a cube, there is a smaller cube $Q'\subseteq Q$ of side $\ell_{Q'}= c \ell_Q$ that does not intersect the zero set $Z(p)$ of $p$.
\end{lem}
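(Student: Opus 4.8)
The plan is to prove Lemma \ref{poly-zero} by an induction on the dimension $n$, reducing the statement at each step to a one-variable fact about polynomials of bounded degree. The base case $n=1$ is the observation that a non-zero polynomial of degree $\leq D$ has at most $D$ real roots, so if we partition an interval $Q$ into $D+1$ equal subintervals, at least one of them, say $Q'$, is free of roots; this gives the constant $c = 1/(D+1)$. For the inductive step, I would slice the cube $Q = Q_1 \times \cdots \times Q_n$ by fixing the last coordinate: writing $p(x', x_n) = \sum_{k=0}^{D} p_k(x') x_n^k$ with each $p_k$ a polynomial in $n-1$ variables of degree $\leq D$, at least one coefficient $p_{k_0}$ is not identically zero. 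By the $(n-1)$-dimensional case applied to $p_{k_0}$ on the cube $Q_1 \times \cdots \times Q_{n-1}$, there is a subcube $Q''$ of side $c_{n-1} \ell_Q$ on which $p_{k_0}$ never vanishes; we then shrink $Q''$ slightly (by a dimension-free factor) so that $|p_{k_0}|$ is bounded below by a positive constant on the shrunken cube.

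On this shrunken $(n-1)$-dimensional cube the restriction of $p$ to each vertical line $\{x'\}\times Q_n$ is a polynomial in $x_n$ of degree $\leq D$ with a coefficient bounded away from zero, hence it is not the zero polynomial; but this is not yet enough, because the subinterval of $Q_n$ avoiding the roots could a priori depend on $x'$. To fix the interval uniformly, I would instead note that the coefficients $p_k(x')$ are all bounded (on a fixed cube) and $|p_{k_0}(x')|$ is bounded below, so the polynomials $p(x',\cdot)$ lie in a compact family of non-zero degree-$\leq D$ polynomials (after normalizing, e.g., dividing by a bound on the sup of the coefficients). A compactness argument then yields a uniform lower bound: there is a fixed subinterval $Q_n' \subseteq Q_n$ of length $c'\ell_Q$, with $c'$ depending only on $n$ and $D$, on which $|p(x',x_n)|$ is bounded below by a positive constant, simultaneously for all $x'$ in the shrunken $(n-1)$-cube. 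The product of the shrunken $(n-1)$-cube with $Q_n'$, after one final shrink to make it a genuine cube of equal sides, is the desired $Q'$.

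Alternatively — and perhaps more cleanly — I would avoid compactness entirely and argue directly: on each vertical line the polynomial $p(x',\cdot)$ has leading-or-otherwise nonvanishing coefficient $p_{k_0}(x')$ controlled from below, and all coefficients controlled from above, so by an explicit quantitative bound (a Cauchy-type estimate on the location of roots, or simply integrating $|p|$ over the interval and using that a degree-$\leq D$ polynomial cannot be uniformly small on an interval without being small everywhere on it) one gets that the set $\{x_n \in Q_n : |p(x',x_n)| < \theta\}$ has measure $\leq C_D \cdot (\text{something}) \cdot \ell_Q$ for an appropriate threshold $\theta$ and a constant depending only on $D$; choosing the parameters so this measure is less than $\frac12|Q_n|$ uniformly in $x'$, one finds a common good subinterval. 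Either way, the main obstacle is exactly this \emph{uniformity in the transverse variable} $x'$: the naive slicing argument only produces a root-free interval depending on the slice, and the whole point is to extract a single subcube that works for the full-dimensional polynomial. Once Lemma \ref{poly-zero} is in hand, I expect Lemma \ref{good-bad} to follow by applying it to a suitable non-zero polynomial built from $\mathrm{tr}$ and $\det$ (for instance $\mathrm{tr}^2 - 4\det$, whose vanishing detects $\lambda = \mu$, together with a comparison of $\lambda$ and $\mu$ via $\mathrm{tr}$ and $\det$) and then subdividing once more according to which of the alternatives (i) or (ii) holds, using continuity of $\lambda,\mu$ and again Lemma \ref{poly-zero} on the polynomials separating the two regimes.
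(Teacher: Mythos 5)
Your route is genuinely different from the paper's: you set up an induction on the dimension with a slicing argument, whereas the paper applies a single global compactness--contradiction argument. The paper assumes the conclusion fails, produces a sequence of normalized degree-$\leq D$ polynomials $p_k$ whose zero sets meet every dyadic subcube of $[0,1]^n$ of side $2^{-k}$, extracts a uniform limit $q\neq0$ in the finite-dimensional space of such polynomials, and shows $q$ must vanish identically --- a contradiction. No slicing, no induction.

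The difficulty you correctly identify --- uniformity of the good interval in the transverse variable $x'$ --- is exactly where your argument currently has a gap, and it is not fully closed by what you wrote. The inductive hypothesis as you state it only produces a subcube $Q''$ on which $p_{k_0}$ \emph{does not vanish}; it gives no quantitative lower bound $|p_{k_0}|\geq m$ on any fixed sub-cube, with $m$ depending only on $n$ and $D$ (after normalization). The minimum of $|p_{k_0}|$ on the closed cube $Q''$ is positive, but it depends on the particular polynomial, not just on $n$ and $D$; "shrink $Q''$ slightly" does not repair this, since a non-vanishing polynomial can be uniformly close to zero on the shrunken cube as well. Both your compactness-per-slice route and your Remez-type route silently use a lower bound "$|p_{k_0}(x')|\geq m$ uniformly" that the inductive hypothesis does not supply. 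To make the induction close, you should strengthen the statement you prove: for every $n,D$ there exist $c,\theta>0$ such that if $p$ has degree $\leq D$ and $\|p\|_{L^\infty(Q)}=1$, then there is a subcube $Q'\subseteq Q$ of side $c\,\ell_Q$ with $|p|\geq\theta$ on $Q'$. The base case $n=1$ of this quantitative version can be done by a Markov/Taylor estimate around a point where $|p|=1$, and the inductive step then goes through along the lines you sketch, because the $(n-1)$-dimensional hypothesis now hands you the needed lower bound on $p_{k_0}$. (A very minor point about your stated base case: partitioning into $D+1$ closed subintervals does not quite work, since the $D$ interior endpoints could all be roots; the $D$ roots split the interval into at most $D+1$ complementary open pieces, the longest of which has length $\geq\ell_Q/(D+1)$, and one should take a slightly smaller closed subinterval inside it.) Alternatively, the paper's single compactness argument sidesteps the entire slicing issue and is shorter; your Lemma~\ref{good-bad} sketch, by contrast, matches the paper's proof closely, down to the choice of comparing $\mathrm{tr}^2$ with $\det$.
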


\begin{proof}
We argue by contradiction. Appropriately translating, rotating and rescaling one may find a sequence $\{p_k\}_{k\in\N}$ of non-zero polynomials in $n$ variables of degree $\leq D$ such that $Z(p_k)$ intersects any dyadic cube of side $2^{-k}$ contained in the unit cube $[0,1]^n$. Multiplying by a constant the $p_k$'s, we can also assume that $||p_k||_{L^\infty([0,1]^n)}=1$. 

Since a space of polynomials of bounded degree is finite-dimensional, there is a subsequence $\{p_{k_m}\}_{m\in\N}$ converging uniformly on $[0,1]^n$ to a non-zero polynomial $q$. If $x\in [0,1]^n$, there is a sequence $x_m\in [0,1]^n$ such that $x_m\rightarrow x$ and $p_{k_m}(x_m)=0$. Then \be
|q(x)|\leq |q(x)-q(x_m)|+|q(x_m)-p_{k_m}(x_m)|\leq |q(x)-q(x_m)|+||q-p_{k_m}||_{L^\infty([0,1]^n)}.
\ee Letting $m$ tend to $+\infty$, we get a contradiction.
\end{proof}

\begin{proof}[Proof of Lemma \ref{good-bad}]
Consider the polynomial \be
p(x):=\left(\frac{\text{tr}(x)^2}{8}-\det(x)\right)\cdot\left(\frac{\text{tr}(x)^2}{4}-\det(x)\right).
\ee Given a cube $Q$, Lemma \ref{poly-zero} gives $Q'$ with side $\ell_{Q'}= c\ell_Q$ such that $p(x)\neq0$ on $Q'$. Notice that $c$ depends only on $n$ and the degrees of $\text{tr}(x)$ and $\det(x)$. Since $\det(x)\leq \frac{1}{4}\text{tr}(x)^2$, $\frac{1}{4}\text{tr}(x)^2-\det(x)>0$ on $Q'$, which implies $\lambda(x)<\mu(x)$. For the first term of $p(x)$ there are two options: either $\frac{\text{tr}(x)^2}{8}<\det(x)$ or $\det(x)<\frac{\text{tr}(x)^2}{8}$ ($\forall x\in Q'$). In the first case, \be
\frac{\mu(x)}{8}\leq\frac{\text{tr}(x)}{8}<\frac{\det(x)}{\text{tr}(x)}\leq \lambda(x)\qquad\forall x\in Q'.
\ee In the second case\be
2\lambda(x)\leq 4\frac{\det(x)}{\text{tr}(x)}<\frac{\text{tr}(x)}{2}\leq \mu(x)\qquad\forall x\in Q'.
\ee In the latter case we pass to an even smaller cube where $\mu$ is approximately constant. This can be done observing that $\frac{\text{tr}(x)}{2}\leq \mu(x)\leq\text{tr}(x)$ and using the fact that if $q$ is a non-negative polynomial in $n$ variables of degree $D$ and $Q$ is a cube, there is a smaller cube $Q'\subseteq Q$ of side $c_{n,D}\ell_Q$ such that $\sup_{Q'}q\leq 2\inf_{Q'}q$. This is well-known (see, e.g., page $146$ of \cite{fe-unc}) and can be proved by a compactness-contradiction argument analogous to the proof of Lemma \ref{poly-zero}.
\end{proof}

\subsection{Proof of Proposition \ref{key-pol2}} 

Let $Q(x,\ell)$ be as in Proposition \ref{key-pol2}. Lemma \ref{good-bad} gives us a smaller cube $Q(y,s)$ such that $s= c\ell$, with $c$ depending only on $n$ and the degrees of $\text{tr}(x)$ and $\det(x)$. Modifying by an inessential factor the value of $c$, we can also assume that $Q(y,2s)\subseteq Q(x,\ell)$. For the moment we do not distinguish between good and bad cubes, and we follow the argument in Section 3 of \cite{ma-sh}. Fix $F\in\mathcal{N}_\gamma(Q(x,\ell))$, where $\gamma$ is to be fixed later. Let $F'$ be a compact set such that: \begin{enumerate}
\item[(a)] $F'$ is the closure of an open set with smooth boundary,
\item[(b)] $F\cap Q(y,s)\subseteq F'\subseteq Q(y,\frac{3}{2}s)$,
\item[(c)] $\text{Cap}(F')\leq 2\text{Cap}(F\cap Q(y,s))\leq 2\text{Cap}(F)$. 
\end{enumerate}
In fact, the outer regularity of capacity gives an open set $\Omega\supseteq F\cap Q(y,s)$ such that $\overline{\Omega}$ satisfies conditions $(2)$ and $(3)$, and any $F'$ that is the closure of a smooth open set and lies between $F\cap Q(y,s)$ and $\Omega$ will do. If $n=1$, this discussion becomes trivial and $F'=\varnothing$. If $n\geq 2$, there exists $A<+\infty$ depending only on $c$ (and hence on $V$) such that\bel\label{capF'}
\text{Cap}(F')\leq 2\text{Cap}(F)\leq 2\gamma \text{Cap}(Q(x,\ell))\leq2\gamma A\text{Cap}(Q(y,s)),
\eel by the monotonicity of capacity and its behavior under scaling. We can now fix $\gamma$ in such a way that 
\bel \label{value-gamma} 2\gamma A<1.
\eel 
If $n\geq 2$, we denote by $P$ the equilibrium potential of $F'$, i.e., the function satisfying \begin{enumerate}
\item[(a)] $P\in C(\R^n,[0,1])$,
\item[(b)] $P\equiv1$ on $F'$,
\item[(c)] $\lim_{y\rightarrow\infty}P(y)=0$ (if $n\geq3$) or $P(y)\equiv0$ for every $y\notin Q(x,2\ell)$ (if $n=2$),
\item[(d)] $\Delta P=0$ on $\R^n\setminus F'$ (if $n\geq3$) or on $Q(x,2\ell)\setminus F'$ (if $n=2$),
\item[(e)] $\int_{\R^n}|\nabla P|^2=\text{Cap}(F')$.
\end{enumerate} The existence of $P$ is a classical fact of potential theory (see \cite{wermer}). If $n=1$, put $P\equiv0$ in what follows. We now state as a lemma a useful property of $P$.

\begin{lem}\label{bound-eqpot}
$\int_{Q(y,s)}(1-P)^2\geq c_n(c\ell)^n.$
\end{lem}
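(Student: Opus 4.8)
The plan is to reduce the inequality to a scale-invariant one. Since $s=c\ell$ and $Q(y,s)$ is a cube of side $s$, the claim is equivalent to the dimensionless bound $\frac{1}{|Q(y,s)|}\int_{Q(y,s)}(1-P)^2\ge c_n$. For $n=1$ there is nothing to prove, as $F'=\varnothing$ and $P\equiv 0$, so the average equals $1$; hence assume $n\ge 2$ and set $\bar P:=\frac{1}{|Q(y,s)|}\int_{Q(y,s)}P\in[0,1]$. If $\bar P\le\frac34$, then by Cauchy--Schwarz $\int_{Q(y,s)}(1-P)^2\ge |Q(y,s)|\,(1-\bar P)^2\ge\frac1{16}|Q(y,s)|$, which is the assertion with $c_n=\frac1{16}$. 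The whole point is therefore to show that the case $\bar P>\frac34$ cannot occur, and this is where the constant $\gamma$ (so far only subject to \eqref{value-gamma}) is pinned down, small enough depending only on $V$.

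So assume $\bar P>\frac34$. Splitting $\int_{Q(y,s)}P$ over $E:=\{x\in Q(y,s):P(x)\ge\frac12\}$ and its complement and using $P\le 1$ on $E$ and $P\le\frac12$ on $Q(y,s)\setminus E$, one obtains $|E|\ge(2\bar P-1)|Q(y,s)|>\frac12|Q(y,s)|$. Consider the truncation $w:=\min(2P,1)$. It belongs to $C(\R^n,[0,1])$, equals $1$ on $E$, satisfies $|\nabla w|\le 2|\nabla P|$ almost everywhere, and (when $n=2$) vanishes outside $Q(x,2\ell)$; by property (e) of the equilibrium potential, $\int_{\R^n}|\nabla w|^2\le 4\int_{\R^n}|\nabla P|^2 = 4\,\text{Cap}(F')$.

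I would then bound $\int_{\R^n}|\nabla w|^2$ from below by a fixed multiple of $\text{Cap}(Q(y,s))$, using that $E$ occupies a definite fraction of $Q(y,s)$. For $n\ge 3$: up to a harmless compactly supported modification with no larger Dirichlet energy, $w$ is admissible for $\text{Cap}(E)$, so $\text{Cap}(E)\le 4\,\text{Cap}(F')$; on the other hand the isocapacitary inequality $\text{Cap}(E)\ge c_n|E|^{(n-2)/n}$ together with the scaling $c_n s^{n-2}\le\text{Cap}(Q(y,s))\le C_n s^{n-2}$ gives $\text{Cap}(E)\ge c_n'\,\text{Cap}(Q(y,s))$. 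For $n=2$: since $w\in H^1_0(Q(x,2\ell))$ with $w\equiv1$ on $E$ and $|E|\ge\frac{c^2}{2}\ell^2$, the Friedrichs--Poincar\'e inequality on $Q(x,2\ell)$ gives $\frac{c^2}{2}\ell^2\le|E|\le\int_{Q(x,2\ell)}w^2\le C(2\ell)^2\int_{\R^2}|\nabla w|^2$, hence $\int_{\R^2}|\nabla w|^2\ge c(V)>0$, while $\text{Cap}(Q(y,s))$ (the relative capacity of a cube in its double) is a universal constant. In either case, combining with \eqref{capF'}, i.e. $\text{Cap}(F')\le 2\gamma A\,\text{Cap}(Q(y,s))$, yields an inequality of the shape $c(n,V)\,\text{Cap}(Q(y,s))\le 8\gamma A\,\text{Cap}(Q(y,s))$, which is absurd once $\gamma$ is below a threshold depending only on $V$. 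Therefore $\bar P>\frac34$ is impossible, and the lemma holds with $c_n=\frac1{16}$.

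The step I expect to demand the most care is the capacity bookkeeping for $n=2$: the equilibrium potential $P$ is normalised relative to $Q(x,2\ell)$, whereas \eqref{capF'} phrases $\text{Cap}(F')$ through $\text{Cap}(Q(y,s))$, so one must use monotonicity and the scaling of relative capacity to reconcile the reference domains; one must also check that the smallness of $\gamma$ imposed here is consistent with (indeed, at least as strong as) the requirement \eqref{value-gamma}.
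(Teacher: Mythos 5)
Your proof is correct, but it takes a genuinely different route from the paper's. The paper deduces the lemma as an immediate corollary of inequality (3.11) of \cite{ma-sh} (equivalently, Lemma~3.1 of \cite{ko-ma-sh}), which is a ready-made quantitative comparison between $\frac{1}{|Q|}\int_Q(1-P_E)^2$ and the ratio $\mathrm{Cap}(E)/\mathrm{Cap}(Q)$; one just plugs in $E=F'$, $Q=Q(y,s)$, uses \eqref{value-gamma} and optimizes over $\sigma$, and the bound holds for \emph{any} $\gamma$ satisfying $2\gamma A<1$. You instead give a self-contained argument: you dichotomize on $\bar P$, and in the dangerous case $\bar P>3/4$ you produce the superlevel set $E=\{P\geq\tfrac12\}$ of measure $>\tfrac12|Q(y,s)|$, truncate to $w=\min(2P,1)$ to get a test function for $\mathrm{Cap}(E)$ of Dirichlet energy $\leq 4\,\mathrm{Cap}(F')$, and then derive a contradiction with \eqref{capF'} via the isocapacitary inequality (for $n\geq3$) or a Poincar\'e inequality (for $n=2$). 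This buys elementariness and transparency at the cost of needing $\gamma$ below a threshold $c(n,V)$ that is strictly smaller than what \eqref{value-gamma} alone imposes; since $\gamma$ is only required to exist in Proposition~\ref{key-pol2}, this is consistent, but you should explicitly replace \eqref{value-gamma} with your sharper smallness condition so that the rest of the section remains valid. Two small technical points are worth making airtight in a final write-up: the truncation $w$ is not $C_c^\infty$ (and for $n\geq3$ is not compactly supported), so one should invoke the standard fact that the variational capacity \eqref{cap3} is unchanged when the infimum is taken over, say, compactly supported $H^1$ functions $\geq1$ on $E$; and for $n=2$ one should spell out that the relative capacity $\mathrm{Cap}(Q(y,s))$ appearing in \eqref{capF'} is taken with respect to $Q(x,2\ell)$, which is dominated by the universal constant $\mathrm{Cap}\bigl(Q(y,s);Q(y,2s)\bigr)$ by monotonicity in the outer domain.
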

\begin{proof} If $n\geq 3$, Lemma $3.1$ of \cite{ko-ma-sh}, or inequality $3.11$ of \cite{ma-sh}, asserts that if $Q$ is a cube, $E\subseteq \frac{3}{2}Q$ is the closure of a smooth open set such that $\text{Cap}(E)<\text{Cap}(Q)$ and $P_E$ is the equilibrium potential of $E$, then
\bel\label{eqpot}
c_n\left(1-\frac{\text{Cap}(E)}{\text{Cap}(Q)}\right)^2\leq \sigma\frac{\text{Cap}(E)}{\text{Cap}(Q)}+\frac{\sigma^{-1}}{|Q|}\int_Q(1-P_E)^2\qquad\forall \sigma>0.
\eel The thesis follows applying this to $E=F'$ and $Q=Q(y,s)$, recalling \eqref{value-gamma} and choosing $\sigma$ small enough (depending on $n$).

The case $n=1$ is trivial, while the case $n=2$ can be derived using the argument above with a minor observation: in \eqref{eqpot} $\text{Cap}(E)$ is the capacity relative to $2Q$, while here we are considering $F'=E$ and we want $\text{Cap}(F')$ to denote capacity relative to $Q(x,2\ell)$. This is not a problem, because the two quantities are easily seen to be comparable up to a constant depending on $c$ and hence, taking $\gamma$ small and recalling \eqref{capF'} and \eqref{value-gamma}, we conclude.
\end{proof}

Let $\eta_\delta\in C^\infty_c(\R^n,[0,1])$ be a scalar test function equal to $1$ on $Q(y,(1-\delta) s)$, equal to $0$ outside of $Q(y,s)$, and such that $|\nabla \eta_\delta|\leq C_n(\delta s)^{-1}=C_n(\delta c\ell)^{-1} $. 

We now analyze separately good and bad cubes.\newline

If $Q(x,\ell)$ is good, we define $\psi:=\eta_\delta(1-P)e_1$, where $e_1=(1,0)$ (but any other unit norm vector of $\C^2$ would do). Notice that $\psi\in E_V$. An integration by parts and the harmonicity of $P$ outside $F'$ yield\bee
\int_{\R^n}|\nabla\psi|^2&=&\int_{\R^n}|\nabla\eta_\delta|^2(1-P)^2+\int_{\R^n}\eta_\delta^2|\nabla P|^2-\int_{\R^n}\nabla (\eta_\delta^2)\cdot (1-P)\nabla P\\
&=&\int_{\R^n}|\nabla\eta_\delta|^2(1-P)^2\leq C_n(\delta c\ell)^{-2}\int_{Q(y,s)}(1-P)^2\leq C_n \delta^{-2} (c\ell)^{n-2},
\eee where the first inequality follows from the properties of $\eta_\delta$, while the second follows from the fact that $0\leq P\leq1$.
Since $Q(x,\ell)\subseteq\{|y|\geq R(\eps)\}$, we test \eqref{hyppol2} on $\psi$, using the bound above. The result is:
\bee
\int_{\R^n} \eta_\delta^2(1-P)^2 &\leq& \eps\left(C_n\delta^{-2} (c\ell)^{n-2}+\int_{\R^n}\eta_\delta^2(1-P)^2(Ve_1,e_1)\right)\\
&\leq& \eps\left(C_n\delta^{-2} (c\ell)^{n-2}+8\int_{Q(y,s)\setminus F'}\lambda\right),
\eee where in the second line we used the fact that $P\equiv1$ on $F'$ and the estimate $(Ve_1,e_1)\leq \mu\leq 8\lambda$, which holds on $Q(y,s)$ because $Q(x,\ell)$ is good. Since $F'\supseteq F\cap Q(y,s)$, we have\bel\label{good-1}
\int_{\R^n} \eta_\delta^2(1-P)^2 \leq\eps\left(C_n\delta^{-2} (c\ell)^{n-2}+8\int_{Q(x,\ell)\setminus F}\lambda\right)
\eel
Observing that $|Q(y,s)\setminus Q(y,(1-\delta)s)|\leq C_n\delta s^n=C_n\delta (c\ell)^n$ and $\eta_\delta\equiv1$ on $Q(y,(1-\delta)s)$ we can bound\be
\int_{Q(y,s)}(1-P)^2\leq\int_{\R^n}\eta_\delta^2(1-P)^2+C_n\delta (c\ell)^n.
\ee
By Lemma \ref{bound-eqpot} we can choose $\delta=\delta_n$ small enough and obtain \bel\label{good-2}c_n(c\ell)^n\leq \int_{\R^n}\eta_{\delta_n}^2(1-P)^2.\eel Putting \eqref{good-1} (for $\delta=\delta_n$) and \eqref{good-2} together, we find\be
c_n(c\ell)^n\leq \eps\left(C_n(c\ell)^{n-2}+8\int_{Q(x,\ell)\setminus F}\lambda\right).
\ee It is now clear that there exist $c_1,c_2>0$ depending only on $V$ such that if $\eps\leq c_1\ell^2$, then $\frac{1}{\ell^n}\int_{Q(x,\ell)\setminus F}\lambda\geq c_2\eps^{-1}$, as we wanted.

Notice that this argument is a modification of Maz'ya-Shubin necessity argument (see Section $3.$ of \cite{ma-sh}). We now move to the bad cubes, which require more work.\newline

If $Q(x,\ell)$ is bad the main difficulty is that $(Ve_1,e_1)$ is not bounded by $\lambda$ on $Q(y,s)$. The natural idea is to test \eqref{hyppol2} on $\psi:=\eta_\delta(1-P)v$, where $v:Q(y,s)\rightarrow \R^2$ is smooth and satisfies $Vv\equiv\lambda v$ and $|v|=1$ everywhere. It is possible to define such a smooth selection of eigenvectors because the cube is simply-connected and $\lambda<\mu$ on $Q(y,s)$, i.e., there are no eigenvalue crossings (by Lemma \ref{good-bad}). Notice that $\lambda$ is smooth on $Q(y,s)$ for the same reason.

Observe that\bee
\int_{\R^n}|\nabla\psi|^2&\leq&2\int_{\R^n}|\nabla(\eta_\delta(1-P))|^2+2\int_{\R^n}\eta_\delta^2(1-P)^2|\nabla v|^2\\
&\leq& C_n \delta^{-2} (c\ell)^{n-2}+2\int_{Q(y,s)}|\nabla v|^2,
\eee where the last bound is proved exactly as for good cubes. We claim that \bel\label{grad-v}
\int_{Q(y,s)}|\nabla v|^2\leq C\ell^{n-2},
\eel where $C$ is a constant depending on $V$, but independent of all the other parameters.
The reader may easily check that the argument given for good cubes may then be carried out word by word to prove Proposition \ref{key-pol2} also for bad cubes. To prove the claim \eqref{grad-v}, we need an elementary inequality.

\begin{lem}\label{grad-poly-pot}
There is a constant $C_1$ depending only on $V$ such that for every cube $Q$\be \int_Q||\partial_jV||_{op}^2\leq C_1\ell_Q^{-2}\int_Q||V||_{\text{op}}^2\qquad\forall j=1,\dots,n.\ee
\end{lem}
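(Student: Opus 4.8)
The plan is to reduce the matrix inequality to the familiar \emph{reverse Poincar\'e} (Caccioppoli-type) inequality for scalar polynomials of bounded degree, and then reassemble. First I would pass from the operator norm to an entrywise norm: for any $d\times d$ matrix $A$ one has $\|A\|_{op}^2\le\|A\|_{HS}^2=\sum_{k,l}|A_{kl}|^2\le d\,\|A\|_{op}^2$, where $\|\cdot\|_{HS}$ denotes the Hilbert--Schmidt (Frobenius) norm and the two inequalities follow from $\sigma_1\le(\sum_i\sigma_i^2)^{1/2}\le\sqrt d\,\sigma_1$ for the singular values. Writing $V=(V_{kl})$ with each $V_{kl}$ a real polynomial, it therefore suffices to prove $\int_Q|\partial_jV_{kl}|^2\le C\,\ell_Q^{-2}\int_Q|V_{kl}|^2$ for every entry, with $C$ depending only on $n$ and on $D:=\max_{k,l}\deg V_{kl}$ (a number determined by the fixed potential $V$); summing over $k,l$ and using the two-sided bound above yields the claim with $C_1=d\cdot C$.

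The core step is thus the scalar statement: for every $n,D\in\N$ there is $C_{n,D}$ such that for every real polynomial $p$ in $n$ variables with $\deg p\le D$, every cube $Q$, and every $j$, one has $\int_Q|\partial_jp|^2\le C_{n,D}\,\ell_Q^{-2}\int_Q|p|^2$. I would prove this in the spirit of Lemma \ref{poly-zero}. By an affine change of variables $\Phi\colon[0,1]^n\to Q$, $\Phi(y)=a+\ell_Q y$, set $q:=p\circ\Phi$; then $\partial_jq=\ell_Q\,(\partial_jp)\circ\Phi$, so $\int_{[0,1]^n}|\partial_jq|^2=\ell_Q^{2-n}\int_Q|\partial_jp|^2$ and $\int_{[0,1]^n}|q|^2=\ell_Q^{-n}\int_Q|p|^2$, and the mismatch of $\ell_Q$-powers produces exactly the prefactor $\ell_Q^{-2}$. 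On the unit cube, the space $\mathcal P_D$ of polynomials of degree $\le D$ is finite-dimensional, the map $p\mapsto(\int_{[0,1]^n}|p|^2)^{1/2}$ is a genuine norm on it (a polynomial vanishing a.e.\ on a cube is the zero polynomial), and $p\mapsto(\int_{[0,1]^n}|\partial_jp|^2)^{1/2}$ is a continuous seminorm; equivalence of norms on a finite-dimensional space delivers the constant $C_{n,D}$. Alternatively one runs the same compactness-contradiction argument used elsewhere in the paper.

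I expect no essential obstacle here. The only points needing a little care are the bookkeeping of the scaling exponents (to be sure the final prefactor is $\ell_Q^{-2}$, matching the right power needed in \eqref{grad-v}), and the observation that the relevant degree bound $D$ is an intrinsic feature of the fixed $V$, so that the resulting $C_1$ may legitimately be allowed to depend on $V$. Once the scalar reverse Poincar\'e inequality is in hand, the matrix case follows immediately from the norm comparisons of the first paragraph.
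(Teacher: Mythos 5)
Correct, and essentially the same argument as the paper's: a finite-dimensionality/norm-equivalence argument on the unit cube followed by rescaling. The paper simply skips your Hilbert--Schmidt detour and works directly in the finite-dimensional space $X_{n,d,D}$ of $H_d$-valued polynomial maps of degree $\leq D$, endowed with the norm $W\mapsto\bigl(\int_{Q(0,1)}\|W\|_{op}^2\bigr)^{1/2}$, on which $\partial_j$ is automatically a bounded linear operator; the entrywise reduction you perform is harmless but unnecessary.
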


\begin{proof}
The set $X_{n,d,D}$ of functions $W:\R^n\rightarrow H_d$ whose matrix elements are polynomials of degree $\leq D$ is a finite-dimensional vector space and $\partial_j$ is a linear operator on $X_{n,d,D}$. Since $\sqrt{\int_{Q(0,1)}||W||_{op}^2}$ is a norm on $X_{n,d,D}$, we have the bound\be
\int_{Q(0,1)}||\partial_jW||_{op}^2\leq C_1\int_{Q(0,1)}||W||_{op}^2 \qquad\forall W\in X_{n,d,D}.
\ee The application of this inequality to $V$, after rescaling, rotating and translating, gives the thesis.
\end{proof}

Let us differentiate the identity $Vv=\lambda v$. Notice that $V$, $v$ and $\lambda$ are real-valued and that all the computations are on $Q(y,s)$. We obtain\bel\label{diff-eig}
(\partial_jV)v+V(\partial_jv)=(\partial_j\lambda) v+\lambda(\partial_j v).
\eel Since $|v|=1$, $\partial_jv$ is pointwise orthogonal to $v$. Taking the real scalar product of both sides of \eqref{diff-eig} with $\partial_jv$, we obtain\be
((\partial_jV)v,\partial_jv)+(V(\partial_jv),\partial_jv)=\lambda|\partial_j v|^2.
\ee Since $\partial_jv$ and $v$ are orthogonal vectors in $\R^2$, $\partial_jv$ must be an eigenvector of eigenvalue $\mu$ at every point of $Q(y,s)$. Hence $(V(\partial_jv),\partial_jv)=\mu|\partial_jv|^2$ and\be
(\mu-\lambda)|\partial_jv|^2=-(\partial_jVv,\partial_jv)\leq ||\partial_jV||_{op}|\partial_jv|.
\ee Since $\mu\geq2\lambda$ and $4\mu\geq ||\mu||_{L^\infty(Q(y,s))}$ on $Q(y,s)$, this implies \be||\mu||_{L^\infty(Q(y,s))}|\partial_jv|\leq 8||\partial_jV||_{op}.\ee Squaring and integrating on $Q(y,s)$ this pointwise inequality, and applying Lemma \ref{grad-poly-pot}, we find\bee
&&||\mu||_{L^\infty(Q(y,s))}^2\int_{Q(y,s)}|\partial_jv|^2\leq64 \int_{Q(y,s)}||\partial_jV||_{op}^2\\ 
&\leq& 64C_1(c\ell)^{-2}\int_{Q(y,s)}||V||_{op}^2\leq 64C_1(c\ell)^{n-2}||\mu||_{L^\infty(Q(y,s))}^2,
\eee where we used the fact that $||V||_{op}=\mu$. Simplifying and summing over $j$, we get \eqref{grad-v} and hence Theorem \ref{pol2}.

\section{A natural extension of the Maz'ya-Shubin condition\\ is not sufficient when $d\geq2$}\label{final}

Our discussion until this point leaves open the possibility that a natural extension of Maz'ya-Shubin's result may hold for matrix-valued potentials. In particular, the statements of Theorem \ref{Ainfty-thm} and Theorem \ref{maz'ya-shubin} may suggest the conjecture that a necessary and sufficient condition for discreteness of the spectrum of $H_V$ may be the validity of\bel\label{wrong-condition}
\lim_{x\rightarrow\infty}\ \inf_{F\in\mathcal{N}_\gamma(Q(x,\ell))}\lambda\left(\int_{Q(x,\ell)\setminus F} V(y)dy\right)=+\infty\qquad\forall \ell>0,
\eel 
for some, and hence every, $\gamma\in(0,1)$.

This condition is indeed necessary. This can be seen adapting the necessity argument of \cite{ma-sh}, in the same spirit of our proof of the implication (i)$\Rightarrow$(ii) of Theorem \ref{Ainfty-thm}. More precisely, one can take the argument of page $931$ of \cite{ma-sh} and replace the scalar test function $\chi_\delta (1-P_F)$ with the vector-valued test function $\chi_\delta (1-P_F)v$, where $v\in\C^d$ has norm $1$ and is otherwise arbitrary. Carrying out their computations and minimizing in $v$ at the end as in the aforementioned proof of Theorem \ref{Ainfty-thm}, one sees that if the vector-valued operator $H_V$ has discrete spectrum, then $V$ satisfies condition \eqref{wrong-condition}.

Nevertheless, the converse does not hold. In Section \ref{Ainfty-section} we have seen that for every $n\geq1$ and $d\geq2$, there is a non-negative $d\times d$ polynomial potential $W_{n,d}$ on $\R^n$ such that $H_{W_{n,d}}$ has not discrete spectrum, $W_{n,d}$ satisfies condition \eqref{condition-b}, and $\lim_{x\rightarrow\infty}\lambda\left(\int_{Q(x,\ell)}W_{n,d}\right)=+\infty$ for every $\ell>0$. We are going to show that such a potential $W_{n,d}$ satisfies \eqref{wrong-condition} for $\gamma$ small enough.

Assume that $n\geq3$. If $F\in\mathcal{N}_\gamma(Q(x,\ell))$, we can use the comparison between Lebesgue measure and capacity
\be |F|\leq C_n\text{Cap}(F)^\frac{n}{n-2},\ee
and the fact that $\text{Cap}(Q(x,\ell))=\ell^{n-2}\text{Cap}(Q(0,1))$, to conclude that \be|F|\leq C_n\gamma^\frac{n}{n-2}|Q(x,\ell)|.\ee If $\gamma$ is small, condition \eqref{condition-b} implies that $\int_{Q(x,\ell)\setminus F}W_{n,d}\geq \beta\int_{Q(x,\ell)}W_{n,d}$ for some $\beta>0$ depending on $W_{n,d}$, and hence \be
\lim_{x\rightarrow\infty}\ \inf_{F\in\mathcal{N}_\gamma(Q(x,\ell))}\lambda\left(\int_{Q(x,\ell)\setminus F}U\right)\geq \beta\cdot\lim_{x\rightarrow\infty}\lambda\left(\int_{Q(x,\ell)}U\right)=+\infty,\ee as we wanted. We omit the elementary observations needed to cover also the cases $n=1$ and $n=2$.

\section*{Acknowledgements}

I would like to thank Fulvio Ricci for his support and many helpful discussions on the subject of this work.

\bibliographystyle{amsalpha}
\bibliography{schrod-biblio}

\end{document}